\newcounter{algorithmicH}
\let\oldalgorithmic\algorithmic
\renewcommand{\algorithmic}{%
  \stepcounter{algorithmicH}
  \oldalgorithmic}
\renewcommand{\theHALG@line}{ALG@line.\thealgorithmicH.\arabic{ALG@line}}
\def\bE{\mathbb{E}}
\def\bP{\mathbb{P}}
\def\bR{\mathbb{R}}
\def\bZ{\mathbb{Z}}
\def\cE{\mathcal{E}}
\def\cF{\mathcal{F}}
\def\cG{\mathcal{G}}
\def\cP{\mathcal{P}}
\def\cS{\mathcal{S}}
\def\cX{\mathcal{X}}
\def\cY{\mathcal{Y}}
\DeclareMathOperator\Diag{\mathrm{Diag}}
\DeclareMathOperator\Var{\mathrm{Var}}
\DeclareMathOperator\Unif{\mathrm{Unif}}
\DeclareMathOperator\supp{\mathrm{supp}}
\DeclareMathOperator\Ent{\mathrm{Ent}}
\DeclareMathOperator\KL{\mathrm{KL}}
\DeclareMathOperator\TV{\mathrm{TV}}
\DeclareMathOperator\Pois{\mathrm{Pois}}
\newcommand{\nbyp}[1]{{\color{red}[Yury: #1]}}
\newcommand{\rom}[1]{\textup{\uppercase\expandafter{\romannumeral#1}}}
\newtheorem{theorem}{Theorem}
\newtheorem{lemma}[theorem]{Lemma}
\newtheorem{proposition}[theorem]{Proposition}
\newtheorem{claim}[theorem]{Claim}
\theoremstyle{definition}
\newtheorem{definition}[theorem]{Definition}
\newtheorem{example}[theorem]{Example}
\begin{document}
\title{Entropy Contractions in Markov Chains: Half-Step, Full-Step and Continuous-Time}
\author{
  Pietro Caputo\thanks{\texttt{pietro.caputo@uniroma3.it}. Universit\`{a} Roma Tre.}
  \and
  Zongchen Chen\thanks{\texttt{chenzongchen@gatech.edu}. Georgia Institute of Technology.}
  \and
  Yuzhou Gu\thanks{\texttt{yuzhougu@nyu.edu}. New York University.}
  \and
  Yury Polyanskiy\thanks{\texttt{yp@mit.edu}. Massachusetts Institute of Technology.}
}
\date{}

\maketitle

\begin{abstract}
  This paper considers the speed of convergence (mixing) of a finite Markov kernel $P$ with respect to
  the Kullback-Leibler divergence (entropy). Given a Markov kernel one defines either a discrete-time
  Markov chain (with the $n$-step transition kernel given by the matrix power $P^n$) or a
  continuous-time Markov process (with the time-$t$ transition kernel given by
  $e^{t(P-\mathrm{Id})}$). The contraction of entropy for $n=1$ or $t=0+$ are characterized by the famous
  functional inequalities,
  the \textit{strong data processing inequality (SDPI)} and the \textit{modified log-Sobolev inequality
  (MLSI)}, respectively.
  When $P=KK^*$ is written as the product of a kernel and its adjoint, one could also consider the ``half-step'' contraction, which is the SDPI for $K$, while the ``full-step'' contraction refers to the SDPI for $P$.
  The work~\cite{del2003contraction} claimed that these contraction
  coefficients (half-step, full-step, and continuous-time) are generally comparable, that is their ratio is bounded from above and below by
  two absolute constants. We disprove this and related conjectures by working out a number of
  different counterexamples. In particular, we construct (a)
  a continuous-time Markov process that contracts arbitrarily faster than its discrete-time
  counterpart; and
  (b) a kernel $P$ such that $P^{m+1}$ contracts arbitrarily better than $P^m$.
  Hence, our main conclusion is that the four standard inequalities comparing five common notions of entropy and variance contraction are generally not
  improvable (even in the subclass of factorizable Markov chains).

  In the process of analyzing the counterexamples, we survey and sharpen the tools for bounding the
  contraction coefficients and characterize properties of extremizers of
  the respective functional inequalities.
  For example, we show that while the MLSI extremizers
  always have full support (unless the MLSI constant equals twice the spectral gap), the SDPI
  extremizers can have partial support.
  As our examples range from Bernoulli-Laplace model, random walks on
  graphs, to birth-death chains, the paper is also intended as a tutorial on computing MLSI, SDPI
  and other constants for these types of commonly occurring Markov chains.

\end{abstract}

\tableofcontents

\section{Introduction} \label{sec:intro}



Markov chains are widely used in almost all areas of science and engineering, in the form
of MCMC averaging in numerical analysis, approximation algorithms in computer science,
generative modeling in artificial intelligence, and so on. Perhaps the most important problem in the study of
Markov chains is to understand their equilibration properties. One example of such property is
the mixing time, characterizing  the
time it takes for the marginal distribution to come close to the stationary one, as measured by
some statistical distance: the total variation (most commonly) or Wasserstein
distance, $\chi^2$ or Kullback-Leibler (KL) divergence.

In this paper, we focus on Markov chains on a finite state space.  There are two common ways to
define or implement Markov processes.  In a discrete-time Markov chain, the state is updated at
every discrete time $t \in \bZ_{\ge 1}$; meanwhile, in a continuous-time Markov chain, the update
times are distributed as a Poisson point process on the positive real axis $\bR_{\ge 0}$.
For example, if the single-step kernel for the former is given by
a row-stochastic matrix $P$ then the corresponding continuous-time chain has kernel $T_t =
e^{t(P-\mathrm{Id})}$. Due to concentration of the number of updates in the time
interval $[0,t]$, both versions are known to have almost equal mixing times in \textit{total
variation} (e.g., \cite[Theorem 20.3]{levin2017markov}). Thus, for the study of total-variation
mixing time probabilists are free to switch between the discrete and continuous times as convenient.

One common approach (e.g., \cite{diaconis1996logarithmic}) to show rapid mixing of Markov processes is to prove that they ``make progress'' step-wise in terms of some $f$-divergence, most commonly the $\chi^2$ or the KL divergence.
For discrete-time Markov chains, this means that the $f$-divergence to the target distribution
decreases by a certain factor in every step, which can be described by the contraction coefficient
of the associated Markov kernel.
For continuous-time chains, this corresponds to the derivative of $f$-divergence with respect to time is suitably bounded away from zero from below, and hence the $f$-divergence to the target distribution decreases at a certain speed.
For the KL divergence, the respective contraction inequalities are known as the \textit{strong data
processing inequality (SDPI)} and the \textit{modified log-Sobolev inequality (MLSI)}, respectively for
discrete-time and continuous-time. See \cref{eqn:modified-log-sobolev-ineq,eqn:f-contraction-coef} below for formal definitions.

For a large family of Markov chains, including the Glauber dynamics on product spaces and random walks on
high-dimensional simplices (e.g., \cite{kaufman2017high}), the transition kernel $P$ consists of
two stages, for which $P=KK^*$ is written as the
product of a kernel and its adjoint. The first stage ($K$) is often described as the downward, forward, or
noising move, and the second stage ($K^*$) as the upward, backward, or denoising move.  Recent success
on analyzing such two-stage processes (e.g., \cite{anari2019log,chen2021optimal,chen2022localization}) considers the decay
of $f$-divergence in a
single stage $K$ or $K^*$, which for many specific problems turns out to
be easier to handle. For such chains it is then natural to define \textit{a half-step}
contraction coefficient (corresponding to application of $K$ only) and ask how it compares with full-step, multi-step and continuous-time
ones.

Given the equivalence between continuous-time and discrete-time mixing times, one naturally expects similar equivalence
between the contraction coefficients. This turns out to be true for the $\chi^2$ contraction (\cite[Remark 4.2]{raginsky2016strong}). Thus, it was not surprising when the work~\cite{del2003contraction} claimed to
show such equivalence for the KL divergence contraction as well (that is showing that the ratio of
the MLSI and SDPI constants is universally bounded).
{
Specifically, they claimed an equivalence (up to universal multiplicative factors) between the MLSI and the \emph{half-step SDPI}, which implies in particular that the MLSI and the \emph{full-step SDPI} (i.e., continuous-time and discrete-time entropy contraction) are equivalent. The present paper originated from us discovering a gap in their proof (see \cref{sec:comparison:error-in-dmlm}) and realizing that their claim cannot hold true because the ratio between the MLSI constant and the half-step SPDI constant can be arbitrarily large for the random transposition model (\cref{ex:random-transposition}).
However, the question of whether the MLSI and the full-step SDPI are equivalent remained open.

We answer this question by presenting an example (\cref{ex:three-state-delta-vs-rho0}) separating the MLSI and the full-step SDPI. That is, there exist cases where the contraction rate of the discrete-time chain can be much slower than that of the continuous-time one. The example is adapted from M\"{u}nch's counterexample to the Peres-Tetali conjecture \cite{Mun23}, although there seems to be no direct relationship between the properties used here and op.~cit.

The separation between continuous-time and discrete-time entropy contraction leads us to consider a related question of comparing the entropy contraction of the $m$-step kernel $P^m$ versus the $(m+1)$-step one $P^{m+1}$. If a separation is possible, then it would explain how a continuous-time chain (which averages over many $P^m$'s) could have better convergence properties at finite time than the corresponding discrete-time chain (see discussions at the end of \cref{sec:comparison:alpha-vs-delta}). It turns out that a counterexample is again possible (see \cref{ex:birth-death} below).

To avoid trivial counterexamples, we restrict our attention to \emph{factorizable} kernels, which are kernels that can be written in the form $P=KK^*$. As discussed above, this is a natural class of Markov kernels to study. All our examples are factorizable.
}

In all, the main purpose of this paper is to provide a self-contained and thorough introduction to all
common notions of relative entropy decay for finite-state Markov chains, for both continuous-time and
discrete-time versions.
We summarize known comparisons among these notions (including the recent half-step
contraction) and we give examples demonstrating that in all cases where comparisons are not
available there exist counterexamples, in the sense that the ratio can be arbitrarily large.
Along the way, we correct several misstatements that appeared in previous works, and show how to
get sharp upper and lower bounds for these coefficients and study the extremizers in the
respective functional inequalities.

\medskip
\textbf{Organization.} This paper's content is succinctly summarized in three tables: \cref{tab:const} gives
definitions of five common contraction coefficients, \cref{tab:relation} lists all known comparison inequalities
along with (new) counterexamples for the missing comparisons, and \cref{tab:example} summarizes the list of
counterexamples. The rest of the introduction defines all notions rigorously and recalls the standard
comparison chain~\cref{eqn:contraction-notion-comparison}:
$$ \rho \le \alpha \le \delta \le \rho_0 \le 2\lambda\,. $$
Then, after introducing factorizable Markov kernels ($P=KK^*$) in \cref{sec:factor},
\cref{sec:comparison} shows that every inequality above can have arbitrarily high ratio,
even when restricted to factorizable kernels. \cref{sec:extremal} concludes with some
results on the properties of extremizers of functional inequalities.

\medskip
\textbf{Notation.}
Throughout the paper, let $\cX$ be a finite set, and $P: \cX \to \cX$ be a Markov kernel with an
invariant distribution $\pi$. Assume that $(\pi, P)$ is reversible, i.e., $\pi(x)P(x,y) = \pi(y)
P(y,x)$ for all $x,y\in \cX$.
For a Markov kernel $K: \cX \to \cY$ and a distribution $\pi$ on $\cX$, we define the reverse channel $K_\pi^*: \cY\to \cX$ as
\begin{align}
  K_\pi^*(y,x) = \left\{
    \begin{array}{ll}
      \frac{\pi(x) K(x,y)}{(\pi K)(y)}, & \text{if}~(\pi K)(y)>0,\\
      \pi(x), & \text{otherwise.}
    \end{array}
  \right.
\end{align}
Note that $\pi$ is an invariant distribution of $KK_\pi^*$ and $(\pi,KK_\pi^*)$ is reversible.

\subsection{Continuous-time contraction notions} \label{sec:intro:continuous}

For $f,g: \cX \to \bR$, define the Dirichlet form as
\begin{align}
  \cE_{\pi,P}(f, g) = -\pi[f (L g)]
\end{align}
where $L=P-I$ is the Markov generator.

The \emph{Poincar\'e constant} (also called the \emph{spectral gap}) $\lambda=\lambda(\pi, P)$ is the largest number such that
\begin{align} \label{eqn:poincare-ineq}
  \lambda \Var_\pi(f) \le \cE_{\pi,P}(f,f), \qquad \forall f: \cX \to \bR,
\end{align}
where $\Var_\pi(f) := \pi[f^2] - (\pi[f])^2$ is the variance of $f$ under $\pi$.
When $f = \frac{d\nu}{d\pi}$ for some $\nu\in \cP(\cX)$ (where $\cP(\cX)$ denotes the space of distributions on $\cX$), we have
$\Var_\pi(f) = \chi^2(\nu \| \pi)$ where $\chi^2(\cdot \| \cdot)$ stands for the $\chi^2$-divergence.
\cref{eqn:poincare-ineq} is called the \emph{Poincar\'e inequality}.

The \emph{log-Sobolev constant} (LSC) $\rho=\rho(\pi, P)$ is the largest number such that
\begin{align} \label{eqn:log-sobolev-ineq}
  \rho \Ent_\pi(f) \le \cE_{\pi,P}(\sqrt f, \sqrt f), \qquad \forall f: \cX \to \bR_{\ge 0},
\end{align}
where $\Ent_\pi(f) := \pi\left[f\log \frac{f}{\pi[f]}\right]$ is the entropy of $f$.
When $f = \frac{d\nu}{d\pi}$ for some $\nu\in \cP(\cX)$, we have $\Ent_\pi(f) = D(\nu \| \pi)$ where $D(\cdot \| \cdot)$ stands for the Kullback-Leibler (KL) divergence.
\cref{eqn:log-sobolev-ineq} is called the \emph{log-Sobolev inequality} (LSI).

The \emph{modified log-Sobolev constant} (MLSC) $\rho_0=\rho_0(\pi, P)$ is the largest number such that
\begin{align} \label{eqn:modified-log-sobolev-ineq}
  \rho_0 \Ent_\pi(f) \le \cE_{\pi,P}(f, \log f), \qquad \forall f: \cX \to \bR_{\ge 0}.
\end{align}
\cref{eqn:modified-log-sobolev-ineq} is called the \emph{modified log-Sobolev inequality} (MLSI).

For reversible $(\pi, P)$, we always have (\cite{diaconis1996logarithmic,bobkov2006modified})
\begin{align} \label{eqn:rho-rho0-lambda-comparison}
  4\rho \le \rho_0 \le 2\lambda.
\end{align}
The constants $\rho$, $\rho_0$, and $\lambda$ represent the contraction ability of the continuous-time Markov chain (also known as the Markov semigroup) $(T_t)_{t\ge 0}$, where $T_t = \exp(-t L)$.\footnote{In this work we focus on continuous-time Markov chains of this type and do not consider more general continuous-time Markov chains.}
These constants are properties of the Markov generator $L$ (which can be arbitrarily scaled), rather than the Markov kernel $P$.

Let $\nu$ be a distribution on $\cX$.
Define $\nu_t = \nu T_t$ to be the distribution of the Markov chain at time $t \ge 0$ when initialized at $\nu$, and define $f_t = \frac{d\nu_t}{d\pi}$ to be the relative density.
A direct computation yields
\begin{align}
  \frac{d}{dt} \Var_\pi(f_t) &= -2\cE_{\pi,P}(f_t, f_t), \\
  \frac{d}{dt} \Ent_\pi(f_t) &= -\cE_{\pi,P}(f_t, \log f_t).
\end{align}
Therefore the Poincar\'e inequality and the modified log-Sobolev inequality can be equivalently stated as
\begin{align}
    \label{eqn:log-sobolev-ineq-alt} \frac{d}{dt} \Var_\pi(f_t) &\le -2 \lambda \Var_\pi(f_t), \\
    \label{eqn:modified-log-sobolev-ineq-alt} \frac{d}{dt} \Ent_\pi(f_t) &\le -\rho_0 \Ent_\pi(f_t).
\end{align}
\cref{eqn:log-sobolev-ineq-alt,eqn:modified-log-sobolev-ineq-alt} can also be understood as alternative definitions for $\lambda$ and $\rho_0$, from which one immediately obtains
\begin{align}
  \label{eqn:lambda-ineq} \Var_\pi(f_t) &\le \exp(-2\lambda t) \Var_\pi(f_0),  \\
  \label{eqn:rho0-ineq} \Ent_\pi(f_t) &\le \exp(-\rho_0 t) \Ent_\pi(f_0).
\end{align}

We remark that the log-Sobolev inequality is equivalent to hypercontractivity by \cite{diaconis1996logarithmic}.
Furthermore, \cite{bobkov1999exponential} shows that the log-Sobolev inequality is equivalent (up to a constant factor) to the Poincar\'e inequality on an Orlicz space. In this work we consider only the Poincar\'e inequality on the $L^2$ space.

We refer the reader to \cite{diaconis1996logarithmic,bobkov2006modified} for more discussions on (modified) log-Sobolev constants.

\subsection{Discrete-time contraction notions} \label{sec:intro:discrete}
Another class of contraction notions comes from contraction coefficients for $f$-divergences. We refer the reader to \cite[Chapter 7]{polyanskiy2024information} for an introduction to $f$-divergences.
Let $K: \cX \to \cY$ be a Markov kernel and $\pi$ be a distribution on $\cX$.
For any $f$-divergence, we define the (input-restricted\footnote{One could also consider the input-unrestricted contraction coefficient defined as $\eta_f(K) := \sup_{\pi\in \cP(\cX)} \eta_f(\pi, K)$. In this paper we focus on the input-restricted version.}) \emph{$f$-contraction coefficient} 
\begin{align} \label{eqn:f-contraction-coef}
  \eta_f(\pi, K) := \sup_{\substack{\nu\in \cP(\cX) \\ 0<D_f(\nu\|\pi)<\infty}} \frac{D_f(\nu K \| \pi K)}{D_f(\nu \| \pi)}.
\end{align}
In other words, we have
\begin{align}
  D_f(\nu K \| \pi K) \le \eta_f(\pi, K) D_f(\nu \| \pi),
\end{align}
known as the \emph{strong data processing inequality} (SDPI).
By the data processing inequality (DPI), we always have $0\le \eta_f(\pi, K) \le 1$, and a smaller value means a stronger contraction ability for $f$-divergence.
The most commonly used $f$-contraction coefficients include the total variation (TV) contraction coefficient $\eta_{\TV}(\pi, K)$, the Kullback-Leibler (KL) contraction coefficient $\eta_{\KL}(\pi, K)$ (the subscript $\KL$ is sometimes omitted), and the $\chi^2$-contraction coefficient $\eta_{\chi^2}(\pi, K)$.
It is known (\cite{ahlswede1976spreading,polyanskiy2017strong}) that
\begin{align} \label{eqn:eta-chi2-kl-tv-comparison}
  \eta_{\chi^2}(\pi, K) \le \eta_{\KL}(\pi, K) \le \eta_{\TV}(\pi, K).
\end{align}

The TV contraction coefficient $\eta_{\TV}(\pi, K)$ is also known as the Dobrushin's coefficient (\cite{dobrushin1956central}), and satisfies
\begin{align} \label{eqn:dobrushin-const-value}
  \eta_{\TV}(\pi, K) = \max_{x,x'\in \cX} \TV(K(x,\cdot),K(x',\cdot)).
\end{align}
Via \cref{eqn:eta-chi2-kl-tv-comparison}, \cref{eqn:dobrushin-const-value} provides an easy upper bound for $\eta_{\chi^2}(\pi, K)$ and $\eta_{\KL}(\pi, K)$.

We refer the reader to \cite{raginsky2016strong,polyanskiy2024information} for more discussions on contraction coefficients.


Let $P = KK_\pi^*$. We define the half-step entropy contraction coefficient
\begin{align}
  \alpha(\pi, K) = 1-\eta_{\KL}(\pi, K) = \inf_{\substack{\nu\in \cP(\cX) \\ 0<D(\nu\|\pi)<\infty}} \frac{D(\nu\|\pi)-D(\nu K\|\pi K)}{D(\nu\|\pi)}
\end{align}
and the full-step entropy contraction coefficient
\begin{align}
  \delta(\pi, P) = 1-\eta_{\KL}(\pi, P) = \inf_{\substack{\nu\in \cP(\cX) \\ 0<D(\nu\|\pi)<\infty}} \frac{D(\nu\|\pi)-D(\nu P\|\pi)}{D(\nu\|\pi)}.
\end{align}
By rearranging, we have inequalities
\begin{align}
  \label{eqn:alpha-ineq} \Ent_{\pi K}(K_\pi^*f) - \Ent_\pi(f) &\le -\alpha \Ent_\pi(f), \\
  \label{eqn:delta-ineq} \Ent_\pi(Pf) - \Ent_\pi(f) &\le -\delta \Ent_\pi(f),
\end{align}
for all $f: \cX \to \bR_{\ge 0}$.
\cref{eqn:alpha-ineq,eqn:delta-ineq} can be seen as definitions of $\alpha$ and $\delta$, and can be compared with \cref{eqn:lambda-ineq,eqn:rho0-ineq}.
In the next section, we will discuss relationships between the discrete-time contraction notions $\alpha$, $\delta$ and the continuous-time contraction notions $\rho$, $\rho_0$, $\lambda$.

\subsection{Continuous-time versus discrete-time contraction} \label{sec:intro:continuous-cs-discrete}
As we discussed above, the log-Sobolev constant $\rho(\pi, P)$, modified log-Sobolev constant $\rho_0(\pi, P)$, and the Poincar\'e constant $\lambda(\pi, P)$ represent the contraction ability of the continuous Markov chain, while the contraction coefficients represent the contraction ability of the discrete-time Markov chain.
These constants allow one to derive mixing time bounds for associated Markov chains in the continuous-time and discrete-time settings respectively, see e.g.~\cite{caputo2023lecture} and the references therein for more details.
In this paper, we give a full comparison between the discrete-time contraction notions and the continuous-time contraction notions.

The $\chi^2$-contraction coefficient has a close relationship with the Poincar\'e constant $\lambda$.
We have
\begin{align} \label{eqn:eta-chi2-K-vs-lambda}
  1-\eta_{\chi^2}(\pi, K) = \lambda(\pi, KK_\pi^*).
\end{align}
For $P=KK_\pi^*$, the two versions of contraction are equivalent up to a constant factor.
By \cref{eqn:eta-chi2-K-vs-lambda} and $1-\lambda(\pi,P^2)=(1-\lambda(\pi,P))^2$, we have
\begin{align} \label{eqn:eta-chi2-P-vs-lambda}
  1-\sqrt{\eta_{\chi^2}(\pi, P)} = \lambda(\pi, P).
\end{align}
In particular,
\begin{align} \label{eqn:eta-chi2-P-vs-lambda-no-sep}
  \frac 12 \left(1-\eta_{\chi^2}(\pi, P)\right) \le \lambda(\pi, P) \le 1-\eta_{\chi^2}(\pi, P).
\end{align}
This shows that the rates of $\chi^2$-divergence contraction for continuous-time and discrete-time Markov chains are within a factor of two of each other.

If we consider entropy (KL divergence) rather than $\chi^2$-divergence, then previous works have shown a one-sided inequality.
\cite{blanca2021entropy} shows that
\begin{align} \label{eqn:delta-vs-rho0}
  \delta(\pi, P) \le \rho_0(\pi, P).
\end{align}
\cite{miclo1997remarques} proves that
\begin{align} \label{eqn:rho-vs-alpha}
  \rho(\pi, P) \le \alpha(\pi, K).
\end{align}
By the data processing inequality, we have
\begin{align} \label{eqn:alpha-vs-delta}
  \alpha(\pi, K) \le \delta(\pi, P).
\end{align}

Summarizing the above, we have a chain of inequalities 
\begin{align} \label{eqn:contraction-notion-comparison}
  \rho \le \alpha \le \delta \le \rho_0 \le 2\lambda.
\end{align}
In other words, we have the following implications:
\begin{multline}\label{eqn:words-contraction-notion-comparison}
\text{Log-Sobolev Inequality}
\Longrightarrow
\text{Half-Step Entropy Contraction}
\Longrightarrow
\text{Full-Step Entropy Contraction} \\
\Longrightarrow
\text{Modified Log-Sobolev Inequality}
\Longrightarrow
\text{Poincar\'e Inequality}
\end{multline}
In \cref{sec:comparison}, we show that the gap between any two adjacent constants in \cref{eqn:contraction-notion-comparison} can be arbitrarily large.
In particular, unlike the $\chi^2$-divergence, the discrete-time entropy contraction $\delta(\pi, P)$ is not equivalent to the continuous-time entropy contraction $\rho_0(\pi, P)$.

\cref{tab:const} summarizes the main constants discussed in this paper, and \cref{tab:relation} summarizes relationships between these contraction notions.
While these contraction notions are in general non-equivalent (up to a constant factor), we will see in \cref{sec:comparison:other} that under extra conditions, some of them could become equivalent for certain chains.

\begin{table}[ht]
  \centering
  \begin{tabular}{|c|c|c|}
    \hline
    Symbol & Name  & Inequality (Definition) \\ \hline
    $\rho(\pi, P)$ & Log-Sobolev Constant & $\rho \Ent_\pi(f) \le \cE_{\pi,P}(\sqrt f,\sqrt f)$ \\ \hline
    $\alpha(\pi, K)$ & Half-Step Entropy Contraction & $\alpha \Ent_\pi(f) \le \Ent_\pi(f) - \Ent_{\pi K}(K_\pi^*f)$ \\ \hline
    $\delta(\pi, P)$ & Full-Step Entropy Contraction & $\delta \Ent_\pi(f) \le \Ent_\pi(f)-\Ent_\pi(Pf)$ \\ \hline
    $\rho_0(\pi, P)$ & Modified Log-Sobolev Constant & $\rho_0 \Ent_\pi(f) \le \cE_{\pi,P}(f,\log f)$ \\ \hline
    $\lambda(\pi, P)$ & Poincar\'e Constant & $\lambda \Var_\pi(f) \le \cE_{\pi,P}(f,f)$ \\ \hline
  \end{tabular}
  \caption{Contraction notions discussed in this paper. We assume that $P=KK_\pi^*$. See \cref{tab:relation} for relationships between the constants.}
  \label{tab:const}
\end{table}

\begin{table}[ht]
  \centering
  \begin{tabular}{|c|c|c|}
    \hline
    Relationship & Explanation & Reference \\ \hline
    $\rho(\pi, P) \le \alpha(\pi, K)$ & \makecell{Log-Sobolev Inequality \\ $\Rightarrow$ Half-Step Entropy Contraction} & \cite[Prop.~6]{miclo1997remarques} \\ \hline
    $\rho(\pi, P) \not \gtrsim \alpha(\pi, K)$ & \makecell{Half-Step Entropy Contraction \\ $\not \Rightarrow$ Log-Sobolev Inequality} & \cref{sec:comparison:rho-vs-alpha} \\ \hline
    $\alpha(\pi, K) \le \delta(\pi, P)$ & \makecell{Half-Step Entropy Contraction \\ $\Rightarrow$ Full-Step Entropy Contraction} & Data processing inequality \\ \hline
    $\alpha(\pi, K) \not \gtrsim \delta(\pi, P) $ & \makecell{Full-Step Entropy Contraction \\ $\not \Rightarrow$ Half-Step Entropy Contraction} & \cref{sec:comparison:alpha-vs-delta} \\ \hline
    $\delta\left(\pi, P^m\right) \not \gtrsim \delta\left(\pi, P^{m+1}\right) $ & \makecell{$(m+1)$-Step Entropy Contraction \\ $\not \Rightarrow$ $m$-Step Entropy Contraction} & \cref{sec:comparison:alpha-vs-delta} \\ \hline
    $\delta(\pi, P) \le \rho_0(\pi, P)$ & \makecell{Full-Step Entropy Contraction \\ $\Rightarrow$ Modified Log-Sobolev Inequality} & \cite[Lemma 2.7]{blanca2021entropy} \\ \hline
    $\delta(\pi, P) \not \gtrsim \rho_0(\pi, P)$ & \makecell{Modified Log-Sobolev Inequality \\ $\not \Rightarrow$ Full-Step Entropy Contraction} & \cref{sec:comparison:delta-vs-rho0} \\ \hline
    $\rho_0(\pi, P) \le 2\lambda(\pi, P)$ & \makecell{Modified Log-Sobolev Inequality \\ $\Rightarrow$ Poincar\'e Inequality} & \cite[Prop.~3.6]{bobkov2006modified} \\ \hline
    $\rho_0(\pi, P) \not \gtrsim \lambda(\pi, P)$ & \makecell{Poincar\'e Inequality \\ $\not \Rightarrow$ Modified Log-Sobolev Inequality} & \cref{sec:comparison:rho0-vs-lambda} \\ \hline
  \end{tabular}
  \caption{Relationships between contraction notions. The setting is the same as \cref{tab:const}. $a\not \gtrsim b$ means $\frac{b}{a}$ can be arbitrarily large. Inequality A $\Rightarrow$ Inequality B means Inequality A with constant $a$ implies Inequality B with constant $C a$ for some absolute constant $C>0$.}
  \label{tab:relation}
\end{table}

\section{Factorizable kernels} \label{sec:factor}
\begin{definition}[Factorizable pairs]
  Let $\cX$ be a finite set and $P: \cX \to \cX$ be a Markov kernel with invariant distribution $\pi$.
  We say $(\pi, P)$ is factorizable if $P=KK_\pi^*$ for some finite Markov kernel $K: \cX\to \cY$.
\end{definition}
\begin{lemma}[Factorizable implies reversible] \label{lem:factor-imply-reversible}
  A factorizable pair $(\pi, P)$ is reversible.
\end{lemma}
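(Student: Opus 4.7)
The plan is to verify the detailed balance equation $\pi(x) P(x,y) = \pi(y) P(y,x)$ directly from the definition of $P = KK_\pi^*$. Writing
\begin{align*}
\pi(x)\,P(x,y) \;=\; \sum_{z\in\cY} \pi(x)\, K(x,z)\, K_\pi^*(z,y),
\end{align*}
I would split the sum according to whether $(\pi K)(z) > 0$ or $(\pi K)(z) = 0$.

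For terms with $(\pi K)(z) > 0$, I would substitute the formula $K_\pi^*(z,y) = \pi(y) K(y,z)/(\pi K)(z)$, producing
\begin{align*}
\pi(x)\, K(x,z)\, K_\pi^*(z,y) \;=\; \frac{\pi(x)\, K(x,z)\, \pi(y)\, K(y,z)}{(\pi K)(z)},
\end{align*}
which is manifestly symmetric under the exchange $x \leftrightarrow y$. For terms with $(\pi K)(z) = 0$, the key observation is that $\pi(x) K(x,z) \le \sum_{x'} \pi(x') K(x',z) = (\pi K)(z) = 0$, so the summand vanishes for every $x$. The same observation applies to $\pi(y) K(y,z)$, so both $\pi(x) P(x,y)$ and $\pi(y) P(y,x)$ receive no contribution from these $z$. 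Combining the two cases yields $\pi(x) P(x,y) = \pi(y) P(y,x)$, which is reversibility.

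I would also briefly note (or leave as an immediate by-product) that $\pi$ is indeed invariant under $P$: summing the symmetric expression above over $x$ collapses to $\pi(y)$ using $\sum_z K(y,z) = 1$ together with the vanishing observation at $z$ with $(\pi K)(z) = 0$. This is implicitly claimed in the paragraph preceding the lemma, and is needed in order for detailed balance to constitute reversibility in the usual sense.

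There is no real obstacle here; the only subtlety is the degenerate case $(\pi K)(z) = 0$, where the definition of $K_\pi^*(z,\cdot)$ is chosen somewhat arbitrarily as $\pi$ itself. The vanishing-marginal argument above shows that this choice is irrelevant for the detailed balance computation, so the proof is a short verification of two lines.
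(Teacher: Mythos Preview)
Your proof is correct and follows the same direct verification of detailed balance as the paper; in fact you are slightly more careful than the paper, which silently ignores the degenerate case $(\pi K)(z)=0$.
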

\begin{proof}
  Suppose $P=KK_\pi^*$ for a finite Markov kernel $K: \cX \to \cY$.
  For $x,y\in \cX$, we have
  \begin{align}
    \pi(x) P(x,y) &= \pi(x) \sum_{z\in \cY} K(x,z) K_\pi^*(z,y) \\
    \nonumber &= \pi(x) \pi(y) \sum_{z\in \cY} \frac 1{(\pi K)(z)} K(x,z) K(y,z)
    = \pi(y) P(y,x).
  \end{align}
  So $(\pi, P)$ is reversible.
\end{proof}

Factorizability is a reasonable assumption for several reasons. Recall that $\rho$, $\rho_0$, and $\lambda$ are properties of the Markov generator $L$, while $\alpha$ and $\delta$ are properties of the Markov kernel $P$, and the two are related by $L=P-I$. If we do not make extra assumptions on $P$, then it could happen that for two Markov kernels $P_1$ and $P_2$, $P_1$ contracts better than $P_2$, but the corresponding Markov generators $L_1$ and $L_2$ satisfy that $L_2$ contracts better than than $L_1$.
Consider the example where $\cX=[2]$, $\pi = \Unif(\cX)$, $P_c = \begin{pmatrix}
  1-c & c \\
  c & 1-c
\end{pmatrix}$ for $c\in [0, 1]$.
Then the contraction ability (as Markov kernels) is increasing for $c\in \left[0,\frac 12\right]$ and decreasing for $c\in \left[\frac 12, 1\right]$.
However, the contraction ability for the corresponding Markov generators $L_c=P_c-I$ is increasing on the whole interval $[0, 1]$.
To avoid such undesirable behavior, we need to impose extra assumptions like factorizability.
Furthermore, \cref{eqn:rho-vs-alpha,eqn:alpha-vs-delta} imply that
\begin{align} \label{eqn:rho-vs-delta}
  \rho(\pi, P) \le \delta(\pi, P)
\end{align}
for factorizable $P$. While the statement of \cref{eqn:rho-vs-delta} does not involve factorizability, it does not hold if $P$ is not factorizable.
Consider the same example with $c=1$, that is, $P=\begin{pmatrix}
  0 & 1 \\ 1 & 0
\end{pmatrix}$.
Then $\delta(\pi, P)=0$ but $\rho(\pi, P) = 1$.

\subsection{Important classes of factorizable kernels}
Another reason that the factorizability assumption is reasonable is that many natural Markov chains considered in the literature are factorizable. In this section we discuss two important classes of factorizable kernels: lazy kernels and the Glauber dynamics.
\begin{lemma}[Lazy implies factorizable] \label{lem:lazy-imply-factor}
  Let $(\pi, P)$ be a reversible pair.
  If $P(x,x) \ge \frac 12$ for all $x\in \cX$, then there exists $\cY$ and $K: \cX \to \cY$ such that $P=KK_\pi^*$.
\end{lemma}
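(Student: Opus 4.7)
The plan is to explicitly construct $K$ by using the lazy decomposition $P=\tfrac12(I+Q)$, where $Q:=2P-I$. The hypothesis $P(x,x)\ge \tfrac12$ makes $Q$ entrywise nonnegative, and a direct check shows $Q$ is row-stochastic and reversible with respect to $\pi$ (reversibility is inherited from $P$ because $I$ is trivially reversible). So the problem reduces to realizing the ``up-down walk'' on edges of $Q$ as the desired factorization.

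Concretely, I would take $\cY$ to be the set of unordered pairs $\{x,y\}$ of elements of $\cX$, including ``self-loop'' pairs $\{x,x\}$. Define $K:\cX\to\cY$ by
\[
K(z,\{x,y\}) \;=\; \begin{cases} Q(z,y), & z=x\neq y,\\ Q(z,x), & z=y\neq x,\\ Q(z,z), & x=y=z,\\ 0, & z\notin\{x,y\}. \end{cases}
\]
This $K$ is stochastic because $\sum_{y}Q(z,y)=1$. Intuitively, from state $z$ the kernel $K$ picks a $Q$-neighbor $y$ and outputs the edge $\{z,y\}$ (or the self-loop $\{z,z\}$ if $y=z$).

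Next I would compute $\pi K$ using the reversibility of $(\pi,Q)$: for $x\neq y$, $(\pi K)(\{x,y\})=\pi(x)Q(x,y)+\pi(y)Q(y,x)=2\pi(x)Q(x,y)$, and $(\pi K)(\{x,x\})=\pi(x)Q(x,x)$. Plugging into the definition of the reverse channel, $K_\pi^*(\{x,y\},\cdot)$ is uniform on the two endpoints $\{x,y\}$ when $x\neq y$, and is the point mass at $x$ when the edge is the self-loop $\{x,x\}$.

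Finally, I would verify $KK_\pi^*=P$ by a short case analysis. For $z\neq z'$, only the edge $\{z,z'\}$ contributes, giving $(KK_\pi^*)(z,z')=Q(z,z')\cdot\tfrac12=P(z,z')$. For $z=z'$, summing the self-loop and the off-diagonal edges through $z$ yields $Q(z,z)+\sum_{y\neq z}\tfrac12 Q(z,y)=\tfrac{1+Q(z,z)}{2}=P(z,z)$. This completes the factorization. The only ``obstacle'' is purely bookkeeping: making sure $K$ is well-defined on unordered pairs and that the factor of $2$ from reversibility cancels correctly in $K_\pi^*$; the key structural ingredient is simply the nonnegativity of $Q=2P-I$ provided by laziness.
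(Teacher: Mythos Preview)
Your proposal is correct and is essentially identical to the paper's proof: the paper also takes $\cY=\binom{\cX}{1}\cup\binom{\cX}{2}$ and sets $K(x,\{x\})=2P(x,x)-1$, $K(x,\{x,y\})=2P(x,y)$, which is exactly your construction in the notation $Q=2P-I$. Your write-up is slightly more detailed (explicitly checking the diagonal case and the computation of $\pi K$), but the idea and execution are the same.
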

\begin{proof}
  Let $\cY = \binom{\cX}1 \cup \binom{\cX}2$.
  Let $K: \cX \to \cY$ be the map
  \begin{align}
    K(x,e) = \left\{
      \begin{array}{ll}
        2P(x,x)-1, & \text{if}~e=\{x\},\\
        2P(x,y), & \text{if}~e=\{x,y\},\\
        0, & \text{if}~x\not \in e.
      \end{array}
    \right.
  \end{align}
  Then $K_\pi^*(e,\cdot) = \Unif(e)$.
  For $y\ne x\in \cX$, we have
  \begin{align}
    (KK_\pi^*)(x,y) = K(x,\{x,y\}) K_\pi^*(\{x,y\},y) = P(x,y).
  \end{align}
  Therefore $P=KK_\pi^*$.
\end{proof}

Lazy chains are quite common. In particular, many of our examples are lazy random walks on regular graphs.
\begin{definition}[Lazy random walk Markov chain] \label{defn:random-walk-markov-chain}
  Let $G=(V,E)$ be a $d$-regular graph.
  We associate with it a canonical Markov chain called the (lazy) random walk.
  Let $\cX=V$, $\cY=E$, and $K: \cX \to \cY$ be $K(x,e)= \frac 1d \mathbbm1\{x\in e\}$ for $x\in \cX$, $e\in \cY$.
  Then $K_\pi^*(e,\cdot) = \Unif(e)$ and $P=KK_\pi^*$ satisfies
  \begin{align}
    P(x,y) = \left\{
      \begin{array}{ll}
        \frac 12, & \text{if}~x=y,\\
        \frac 1{2d}, & \text{if}~(xy)\in E, \\
        0, & \text{otherwise.}
      \end{array}
    \right.
  \end{align}
\end{definition}

The Glauber dynamics is an important class of Markov chains that have shown huge practical and theoretical success in sampling spin systems.
Let us consider a general $\alpha$-weighted block dynamics, defined as follows.
Let $\pi$ be a probability measure on $\Omega=[q]^n$ (e.g., the Ising model on a graph with $n$ vertices). For any $\sigma,\eta\in \Omega$ and $A\subseteq [n]$, consider the conditional probability of $\eta$ given the configuration $\sigma$ on $A$:
\begin{align}
  \pi(\eta | \sigma_A) = \frac{\pi(\eta) \mathbbm1\{\sigma|_A = \eta|_A\}}{\sum_{\eta'\in \Omega} \pi(\eta') \mathbbm1\{\sigma|_A = \eta'|_A\}}.
\end{align}
Let $\cS = 2^{[n]}$ be the set of all subsets of $[n]$. For any probability measure $\alpha = (\alpha_A)_{A\in \cS}$ on $\cS$, the $\alpha$-weighted block dynamics is the Markov chain on $\Omega$ with transition matrix
\begin{align}
  P(\sigma,\sigma') = \sum_{A\in \cS} \alpha_A \pi(\sigma' | \sigma_{A^c})
\end{align}
where $A^c = [n]\backslash A$. The case $\alpha_A = \frac 1n \mathbbm1\{|A|=1\}$ is known as the Glauber dynamics. The pair $(\pi, P)$ is reversible. A factorization of the $\alpha$-weighted block dynamics can be defined as follows.
Let $\cX = \Omega$, $\cY=\cS \times \Omega$, and $K: \cX \to \cY$ be defined as
\begin{align}
  K(\sigma, (A, \eta)) = \alpha_A \pi(\eta | \sigma_{A^c}).
\end{align}
One can compute that $(\pi K)(A, \eta) = \alpha_A \pi(\eta)$ and that $K_\pi^*((A,\eta), \sigma) = \pi(\sigma | \eta_{A^c})$ for all $(A,\eta)\in \cS \times \Omega$, $\sigma\in \Omega$.
Therefore, for all $\sigma,\sigma'\in \Omega$,
\begin{align}
  \sum_{(A,\eta)\in \cS \times \Omega} K(\sigma, (A,\eta)) K_\pi^*((A,\eta), \sigma')
  = \sum_{A\in \cS} \alpha_A \sum_{\eta\in \Omega} \pi(\eta | \sigma_{A^c}) \pi(\sigma'|\eta_{A^c})
  = P(\sigma,\sigma').
\end{align}
Thus $P=KK_\pi^*$ is a factorization of the $\alpha$-weighted block dynamics.
Half-step entropy contractions for such Markov chains have been recently investigated in the context of spin systems under the name of block factorizations of entropy (e.g., \cite{BCCPSV22}).

\subsection{Non-uniqueness of factorization}
In general, a factorizable pair $(\pi, P)$ can be factorized in more than one different ways, and the associated half-step contraction rates may differ considerably. This is illustrated in the following example.
\begin{example}[Complete graph] \label{ex:complete}
  Let $n\ge 3$ be an integer. Let $\cX = [n]$, $\pi = \Unif(\cX)$.
  Let $P: \cX \to \cX$ be the lazy random walk on the complete graph.
  That is,
  \begin{align}
    P(x,y) = \left\{
      \begin{array}{ll}
        \frac 12, & \text{if}~x=y,\\
        \frac 1{2(n-1)}, & \text{if}~x\ne y.
      \end{array}
    \right.
  \end{align}
  Given an integer $2\le \ell \le n$, let $\cY$ be the set of all subsets $A\subseteq [n]$ with either $|A|=1$ or $|A|=\ell$, and define $K(\ell): \cX \to \cY$ as
  \begin{align}
    K(\ell)(x,y) = \left\{
      \begin{array}{ll}
        \frac{\ell-2}{2(\ell-1)}, & \text{if}~y=\{x\},\\
        \frac{\ell}{2(\ell-1) \binom{n-1}{\ell-1}}, & \text{if}~|y|=\ell, x\in y,\\
        0, & \text{otherwise.}
      \end{array}
    \right.
  \end{align}
  One can check that $K(\ell)_\pi^*(y, \cdot) = \Unif(y)$ and that $K(\ell) K(\ell)_\pi^* = P$.
  We also note that when $\ell=2$ this reduces to the construction in the proof of \cref{lem:lazy-imply-factor}.

  \cite[Theorem 1.1]{bristiel2024entropy} computes the half-step entropy contraction coefficient $\alpha(\pi, K(\ell))$ for every $\ell$, and shows that
  \begin{align} \label{eqn:complete-alpha}
    \alpha(\pi, K(\ell)) = \frac{\ell \log \ell}{2(\ell-1) \log n},
  \end{align}
  achieved at and only at point distributions.
  From \cref{eqn:complete-alpha} we see that $\alpha(\pi, K(\ell))$ increases with $\ell$ from the minimum value $\alpha(\pi, K(2)) = \frac{\log 2}{\log n}$ to the maximum value $\alpha(\pi, K(n)) = \frac{n}{2(n-1)}$.
  The former is of the same magnitude of the log-Sobolev constant $\rho(\pi, P) = \frac{n-2}{2(n-1)\log(n-1)}$ (\cite[Corollary A.5]{diaconis1996logarithmic}) and the latter matches asymptotically with the full-step contraction rate $\delta(\pi, P) = \frac 12 \pm o(1)$ (\cite[Eq.~(131)]{gu2023non}).
\end{example}

\subsection{Characterizations of factorizable kernels}
An interesting question is to characterize the set of factorizable kernels for a fixed $\pi$.
\cref{lem:lazy-imply-factor} provides a sufficient condition and it is certainly not necessary.
For example, any pair $(\pi, P)$ satisfying $P(x,\cdot)=\pi$ for all $x\in \cX$ is factorizable (see \cref{ex:one-step}).
On the other hand, a necessary condition for factorizability is positive semidefiniteness.
\begin{lemma}[Factorizable implies positive semidefinite] \label{lem:factor-imply-psd}
  Let $(\pi, P)$ be a reversible pair.
  If $P$ is factorizable, then the matrix $\Diag(\pi) P$ is positive semidefinite (PSD), where $\Diag(\pi)$ denotes the $\cX\times \cX$ diagonal matrix with diagonal $\pi$.
\end{lemma}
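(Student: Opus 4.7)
The plan is to realize $\Diag(\pi)P$ explicitly as a Gram matrix $MM^{T}$ for some real rectangular matrix $M$; positive semidefiniteness follows immediately. The construction of $M$ is read off from the factorization $P = KK_\pi^*$ and the formula for the reverse channel.

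First I would unfold the entries of $\Diag(\pi)P$. For $x,x'\in\cX$, writing $\mu=\pi K$, the definition of $K_\pi^*$ gives
\begin{align}
(\Diag(\pi)P)(x,x')
=\pi(x)\sum_{y\in\cY} K(x,y)\,K_\pi^*(y,x')
=\sum_{y:\,\mu(y)>0}\frac{\pi(x)K(x,y)\cdot \pi(x')K(x',y)}{\mu(y)},
\end{align}
where the restriction to $\{y:\mu(y)>0\}$ is harmless because whenever $\mu(y)=0$ one has $\pi(x)K(x,y)=0$ for every $x$, so such $y$ contribute nothing regardless of the convention chosen in the definition of $K_\pi^*$.

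Next I would introduce the $\cX\times\cY'$ matrix $M$, where $\cY'=\{y\in\cY:\mu(y)>0\}$, with entries
\begin{align}
M(x,y)=\frac{\pi(x)K(x,y)}{\sqrt{\mu(y)}},\qquad x\in\cX,\ y\in\cY'.
\end{align}
The computation in the previous display becomes $(\Diag(\pi)P)(x,x')=\sum_{y\in\cY'} M(x,y)M(x',y)=(MM^{T})(x,x')$. Hence $\Diag(\pi)P = MM^{T}$ is a real symmetric Gram matrix and therefore PSD.

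There is essentially no obstacle beyond bookkeeping; the only subtle point is the $\mu(y)=0$ case, handled by the observation above. As a sanity check, the argument also recovers reversibility (already proved in \cref{lem:factor-imply-reversible}): symmetry of $MM^{T}$ is the detailed balance relation $\pi(x)P(x,x')=\pi(x')P(x',x)$.
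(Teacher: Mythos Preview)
Your proof is correct and essentially identical to the paper's: both compute $(\Diag(\pi)P)(x,x')=\sum_y \pi(x)K(x,y)\pi(x')K(x',y)/(\pi K)(y)$ and recognize this as $MM^{T}$ with $M=\Diag(\pi)K\Diag(\pi K)^{-1/2}$. The only cosmetic difference is that the paper disposes of the $\mu(y)=0$ case by a ``WLOG $\pi K$ has full support'' while you restrict to $\cY'$ explicitly; your justification that $\pi(x)K(x,y)=0$ whenever $\mu(y)=0$ is the right observation.
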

\begin{proof}
  Let $A = \Diag(\pi) P$.
  Let $K: \cX \to \cY$ be a Markov kernel such that $P = KK_\pi^*$.
  WLOG assume that $\pi K$ has full support.
  Then
  \begin{align}
    A_{x,y} = \pi(x) \sum_{z\in \cY} K(x,z) K_\pi^*(z,y)
    = \pi(x) \pi(y) \sum_{z\in \cY} \frac 1{(\pi K)(z)} K(x,z) K(y,z).
  \end{align}
  So $A = M M^\top$ where $M = \Diag(\pi) K \Diag(\pi K)^{-1/2}$.
  This finishes the proof.
\end{proof}
In particular, while a factorizable kernel is not necessarily lazy, when $\pi$ has full support, $P$ must have strictly positive diagonal entries.

For a distribution $\pi$ on $\cX$, let $\cF_\pi$ denote the set of Markov kernels $P: \cX \to \cX$ such that $(\pi, P)$ is factorizable.
\begin{lemma}[$\cF_\pi$ is convex] \label{lem:factor-is-convex}
  For any distribution $\pi$, the set $\cF_\pi$ is convex.
\end{lemma}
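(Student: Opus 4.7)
The plan is to construct an explicit factorization of any convex combination $P = t P_1 + (1-t) P_2$ (with $t\in[0,1]$ and $P_1,P_2\in\cF_\pi$) by ``gluing together'' factorizations of $P_1$ and $P_2$ on a disjoint union of auxiliary state spaces. Write $P_1 = K_1 K_{1,\pi}^*$ with $K_1:\cX\to\cY_1$ and $P_2 = K_2 K_{2,\pi}^*$ with $K_2:\cX\to\cY_2$, and take $\cY = \cY_1 \sqcup \cY_2$ (disjoint union).

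Define $K:\cX\to\cY$ by
\begin{align}
  K(x,y) = \begin{cases} t\, K_1(x,y), & y\in \cY_1, \\ (1-t)\, K_2(x,y), & y\in \cY_2. \end{cases}
\end{align}
The first step is to check that $K$ is a Markov kernel, which is immediate since $\sum_{y\in\cY_1} K_1(x,y) = \sum_{y\in\cY_2} K_2(x,y) = 1$. Next, compute $\pi K$: for $y\in \cY_1$ one has $(\pi K)(y) = t (\pi K_1)(y)$, and for $y\in \cY_2$, $(\pi K)(y) = (1-t)(\pi K_2)(y)$. Plugging into the definition of the reverse channel, the scalar factors $t$ and $1-t$ cancel in the ratio, so $K_\pi^*(y,\cdot) = K_{1,\pi}^*(y,\cdot)$ for $y\in \cY_1$ (with $(\pi K)(y)>0$) and $K_\pi^*(y,\cdot) = K_{2,\pi}^*(y,\cdot)$ for $y\in \cY_2$; the degenerate case $(\pi K)(y)=0$ contributes nothing since then $K(x,y)=0$ whenever $\pi(x)>0$.

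Finally, splitting the sum over $\cY$ gives
\begin{align}
  (K K_\pi^*)(x,x') &= \sum_{y\in\cY_1} t\, K_1(x,y)\, K_{1,\pi}^*(y,x') + \sum_{y\in\cY_2} (1-t)\, K_2(x,y)\, K_{2,\pi}^*(y,x') \\
  &= t\, P_1(x,x') + (1-t)\, P_2(x,x'),
\end{align}
so $K K_\pi^* = P$ and $P\in \cF_\pi$. There is no real obstacle here: the only mild subtlety is the edge case in the definition of $K_\pi^*$ when $(\pi K)(y)=0$, which is harmless because such $y$ contribute zero to the composition. Thus $\cF_\pi$ is convex.
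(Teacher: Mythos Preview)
Your proof is correct and follows essentially the same approach as the paper: form the disjoint union $\cY_1\sqcup\cY_2$ and define $K$ by weighting the two factor kernels by $t$ and $1-t$. You actually give more detail than the paper does, explicitly computing $\pi K$, verifying that the reverse channel restricts to $K_{i,\pi}^*$ on each piece, and noting that the degenerate case $(\pi K)(y)=0$ is harmless.
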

\begin{proof}
  Let $P_0, P_1\in \cF_\pi$ and $K_0: \cX\to \cY_0$, $K_1: \cX\to \cY_1$ be the corresponding factors.
  Let $t\in [0, 1]$. We prove that $P_t := (1-t) P_0 + t P_1$ is in $\cF_\pi$.
  Let $K_t: \cX \to \cY_0 \sqcup \cY_1$ be defined as
  \begin{align}
    K_t(x,y) = \left\{
      \begin{array}{ll}
        (1-t) K_0(x, y), & \text{if}~y\in \cY_0,\\
        t K_1(x, y), & \text{if}~y\in \cY_1.
      \end{array}
    \right.
  \end{align}
  Then we can verify that $K_t$ is a Markov kernel, and
  \begin{align}
    K_t (K_t)_\pi^* = (1-t) K_0 (K_0)_\pi^* + t K_1 (K_1)_\pi^* = P_t.
  \end{align}
\end{proof}

For a distribution $\pi$ on $\cX$, let $\cP_\pi$ denote the set of Markov kernels $P: \cX \to \cX$ such that $\Diag(\pi) P$ is PSD.
By \cref{lem:factor-imply-psd,lem:factor-is-convex}, $\cF_\pi$ is a convex subset of $\cP_\pi$.
When the state space is binary, the two sets are equal, but this is not true in general.
\begin{lemma}
  If $|\cX|=2$, then for any distribution $\pi$ on $\cX$, we have $\cF_\pi=\cP_\pi$.
\end{lemma}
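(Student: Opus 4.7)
The plan is to combine the previous two lemmas with an explicit two-point spectral decomposition of $P$. The forward inclusion $\cF_\pi \subseteq \cP_\pi$ is \cref{lem:factor-imply-psd}, so the task reduces to showing $\cP_\pi \subseteq \cF_\pi$.

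First I would observe that on a two-point space, reversibility essentially pins down the structure of $P$. Writing $P = \begin{pmatrix} 1-a & a \\ b & 1-b \end{pmatrix}$, detailed balance forces $\pi(0) a = \pi(1) b$, and the spectrum of $P$ is $\{1, 1 - (a+b)\}$. By the similarity argument used in the proof of \cref{lem:factor-imply-psd}, the condition that $\Diag(\pi) P$ is PSD is equivalent to both eigenvalues of $P$ being nonnegative, i.e., to $c := a + b \in [0, 1]$. A short calculation, using detailed balance to reconcile the $a$ and $b$ entries, then yields the convex decomposition
\begin{align*}
P \;=\; (1-c)\, I \;+\; c\, \Pi,
\end{align*}
where $I$ is the identity kernel on $\cX$ and $\Pi(x, \cdot) = \pi$ is the rank-one kernel with every row equal to $\pi$.

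It remains to check that both $I$ and $\Pi$ lie in $\cF_\pi$. Taking $\cY = \{*\}$ and $K(x, *) = 1$ gives $K K_\pi^* = \Pi$, so $\Pi \in \cF_\pi$. Taking $\cY = \cX$ and $K = I$ gives $K K_\pi^* = I$ when $\pi$ has full support; the remaining degenerate case, where $\pi$ is concentrated on one point, can be dispatched by restricting to the support of $\pi$. Convexity of $\cF_\pi$ (\cref{lem:factor-is-convex}) then immediately gives $P \in \cF_\pi$.

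I do not anticipate any serious obstacle: the content of the proof is the two-point decomposition $P = (1-c) I + c \Pi$ together with the observation that the extreme points $I$ and $\Pi$ of this segment are manifestly factorizable. Everything else is direct verification.
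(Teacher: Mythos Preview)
Your proposal is correct and matches the paper's proof essentially verbatim: the paper also identifies $\cP_\pi$ as the convex hull of $\{I,\, J\Diag(\pi)\}$ (your $\Pi$), observes that both extreme points are factorizable, and invokes the convexity lemma for $\cF_\pi$. Your handling of the degenerate non-full-support case is a bit hand-wavy, but the paper glosses over it as well (and indeed the statement as literally written fails there, since then $\cF_\pi=\{\Pi\}$ while $I\in\cP_\pi$); under the tacit full-support assumption both arguments are complete.
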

\begin{proof}
  Direct calculation shows that $\cP_\pi$ is the convex hull of
  $\{J \Diag(\pi), I\}$, where $J$ is the all-ones matrix.
  Both extreme points are in $\cF_\pi$.
\end{proof}
\begin{lemma}
  For $n\ge 5$, $\cX=[n]$, and $\pi = \Unif(\cX)$, the set $\cF_\pi$ is strictly smaller than $\cP_\pi$.
\end{lemma}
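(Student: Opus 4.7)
The plan is to reduce the lemma to the classical fact that for $n\ge 5$ the cone of \emph{completely positive} (CP) matrices --- those of the form $MM^\top$ with $M\ge 0$ --- is strictly contained in the cone of \emph{doubly non-negative} (DNN) matrices. Under $\pi=\Unif([n])$ one has $\Diag(\pi)=\tfrac 1n I$, so $\cP_\pi$ is exactly the set of symmetric doubly-stochastic PSD matrices with non-negative entries. I first claim that $\cF_\pi$ coincides with the set of doubly-stochastic CP matrices: the forward implication is the proof of \cref{lem:factor-imply-psd}; for the converse, given $M\ge 0$ with $MM^\top=P$ and $P$ row-stochastic, set $v:=M^\top\mathbf 1$, so that $Mv=P\mathbf 1=\mathbf 1$, and (dropping any zero columns of $M$ so $v>0$) the kernel $K(x,y):=M(x,y)\,v_y$ is row-stochastic and satisfies $KK_\pi^*=P$ by direct computation.

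For a concrete witness when $n=5$, I would take the circulant matrix $A_a$ with first row $(1-2a,\,a,\,0,\,0,\,a)$, which is symmetric, doubly stochastic, and whose off-diagonal support is the $5$-cycle $C_5$. Its eigenvalues are $\lambda_k=(1-2a)+2a\cos(2\pi k/5)$, minimized at $k=2$ with $\lambda_2=1-\tfrac{5+\sqrt 5}{2}a$, so $A_a\in\cP_\pi$ iff $a\le\tfrac{5-\sqrt 5}{10}$. On the other hand, in any CP decomposition $A_a=\sum_k m_k m_k^\top$ with $m_k\ge 0$, each $\mathrm{supp}(m_k)$ must be a clique in the support graph of $A_a$, since $A_{a,ij}=0$ forces $(m_k)_i(m_k)_j=0$; because $C_5$ is triangle-free, $\mathrm{supp}(m_k)$ is either a single vertex or an edge. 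Aggregating, each edge $\{k,k+1\}$ contributes a $2\times 2$ PSD non-negative matrix with off-diagonal entry $a$ and non-negative diagonals $p_{k,k+1},p_{k+1,k}$ satisfying $p_{k,k+1}p_{k+1,k}\ge a^2$; WLOG we take equality, parametrize $p_{k,k+1}=a\,t_k$, $p_{k+1,k}=a/t_k$, and the diagonal constraint $\sum_{j\sim k}p_{kj}\le 1-2a$ at vertex $k$ reads $a(t_k+1/t_{k-1})\le 1-2a$. Summing over $k\in\{0,\dots,4\}$ and applying AM-GM $t_k+1/t_k\ge 2$ yields the necessary condition $a\le 1/4$; the choice $t_k\equiv 1$ is feasible whenever $a\le 1/4$. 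Hence $A_a$ is CP iff $a\le 1/4$, and any $a\in(1/4,\tfrac{5-\sqrt 5}{10}]$ (e.g., $a=0.26$) yields $A_a\in\cP_\pi\setminus\cF_\pi$.

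For $n>5$ I would take the block-diagonal extension $B:=A_a\oplus I_{n-5}$, which again lies in $\cP_\pi$. If $B=MM^\top$ with $M\ge 0$, splitting the rows of $M$ into a top block $M_1$ (first five rows) and a bottom block $M_2$ (remaining $n-5$ rows), the relation $M_1M_2^\top=0$ with $M_1,M_2\ge 0$ forces their column supports to be disjoint; restricting $M$ to the columns supported in $M_1$ then gives a non-negative factorization of $A_a$, contradicting non-CP of $A_a$. The main conceptual step is the clique-support restriction for CP decompositions on sparse support graphs, which collapses the apparently infinite-dimensional CP feasibility problem to the finite system solved by AM-GM; all remaining steps are routine.
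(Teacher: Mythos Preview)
Your proof is correct and, in fact, sharper than the paper's. The paper does not characterize $\cF_\pi$: it only uses the forward implication (factorizable $\Rightarrow$ $\Diag(\pi)P$ completely positive), then invokes the literature fact that for $n\ge 5$ there exist PSD matrices with \emph{strictly positive} entries that are not CP, and applies Sinkhorn's theorem to rescale such a matrix into a doubly-stochastic one, arguing by contradiction that the result cannot be factorizable. Your argument is different in two ways. First, you establish the converse as well, so that for uniform $\pi$ the set $\cF_\pi$ is \emph{exactly} the doubly-stochastic CP matrices; your construction $K(x,y)=M(x,y)v_y$ with $v=M^\top\mathbf 1$ is clean and checks out. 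Second, instead of citing the abstract DNN$\,\neq\,$CP fact and then invoking Sinkhorn, you exhibit a concrete doubly-stochastic witness (the $C_5$ circulant $A_a$) and determine the exact CP range $a\le 1/4$ via the clique-support reduction plus AM--GM; this bypasses Sinkhorn entirely and makes the example fully explicit. The block-diagonal extension to $n>5$ and the column-support disjointness argument for $M_1M_2^\top=0$ are both fine. The trade-off is length: the paper's proof is a short paragraph leaning on two citations, whereas yours is self-contained and yields the additional structural statement $\cF_{\Unif([n])}=\{\text{doubly-stochastic CP matrices}\}$.
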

\begin{proof}
  An $n\times n$ matrix $A$ is called completely positive if it can be written as $A=MM^\top$ for some (not necessarily square) matrix $M$ with non-negative entries.
  Clearly, all completely positive matrices are PSD.
  It is known (\cite{maxfield1962matrix,berman2003completely}) that for $n\le 4$, a PSD matrix is completely positive if and only if all its entries are non-negative, while for $n\ge 5$, there exist PSD matrices with strictly positive entries that are not completely positive.

  Fix $n\ge 5$, $\cX=[n]$, and $\pi=\Unif(\cX)$.
  Then $\cP_\pi$ is exactly the set of doubly stochastic PSD matrices.
  Let $A$ be an $n\times n$ PSD matrix with strictly positive entries that is not completely positive.
  By Sinkhorn's theorem (\cite{marcus1961permanent,sinkhorn1964relationship}), there is a diagonal matrix $D$ with strictly positive entries such that $DAD$ is doubly stochastic.
  Let $P=DAD$. Clearly $P$ is in $\cP_\pi$. We claim that $P$ is not in $\cF_\pi$.
  Suppose for the sake of contradiction that $P=KK_\pi^*$ for some $K$.
  Then
  \begin{align}
    A = D^{-1} P D^{-1} = \frac 1n D^{-1} K \Diag(\pi K)^{-1} K^\top D^{-1} = MM^\top
  \end{align}
  where $M=\frac 1{\sqrt n} D^{-1} K \Diag(\pi K)^{-1/2}$ is non-negative.
  This contradicts with the assumption that $A$ is not completely positive.
\end{proof}

It remains an interesting open problem to characterize $\cF_\pi$, even for uniform $\pi$.

\section{Comparison between constants} \label{sec:comparison}
In this section we compare constants in \cref{tab:const}, showing that there is a superconstant separation between any two of them. \cref{tab:example} summarizes examples in this section.

\begin{table}[ht]
  \centering
  \begin{tabular}{|c|c|c|}
    \hline
    Example & Description & Separation \\ \hline
    \cref{ex:one-step} & One-step chain & \makecell{Log-Sobolev Constant $\rho(\pi, P)$ \\ vs Half-Step Entropy Contraction $\alpha(\pi, K)$} \\ \hline
    \cref{ex:1-to-k} & $1$-to-$k$ chain & \makecell{Half-Step Entropy Contraction $\alpha(\pi, K)$ \\ vs Full-Step Entropy Contraction $\delta(\pi, P)$} \\ \hline
    \cref{ex:bernoulli-laplace} & Bernoulli-Laplace model & \makecell{Half-Step Entropy Contraction $\alpha(\pi, K)$ \\ vs Full-Step Entropy Contraction $\delta(\pi, P)$} \\ \hline
    \cref{ex:three-state-alpha-vs-delta} & Three-state chain & \makecell{One-Step Entropy Contraction $\delta(\pi, P)$ \\ vs Two-Step Entropy Contraction $\delta\left(\pi, P^2\right)$} \\ \hline
    \cref{ex:birth-death} & Birth-death chain & \makecell{$m$-Step Entropy Contraction $\delta\left(\pi, P^m\right)$ \\ vs $(m+1)$-Step Entropy Contraction $\delta\left(\pi, P^{m+1}\right)$} \\ \hline
    \cref{ex:three-state-delta-vs-rho0} & Three-state chain & \makecell{Full-Step Entropy Contraction $\delta(\pi, P)$ \\ vs Modified Log-Sobolev Constant $\rho_0(\pi, P)$} \\ \hline
    \cref{ex:expander} & Expander graph & \makecell{Modified Log-Sobolev Constant $\rho_0(\pi, P)$ \\ vs Poincar\'e Constant $\lambda(\pi, P)$} \\ \hline
    \cref{ex:random-transposition} & Random transposition model & \makecell{Half-Step Entropy Contraction $\alpha(\pi, K)$ \\ vs Modified Log-Sobolev Constant $\rho_0(\pi, P)$} \\ \hline
  \end{tabular}
  \caption{Examples in \cref{sec:comparison} and the separations they witness.}
  \label{tab:example}
\end{table}
\subsection{Log-Sobolev constant \texorpdfstring{$\rho$}{rho} vs half-step entropy contraction \texorpdfstring{$\alpha$}{alpha}} \label{sec:comparison:rho-vs-alpha}
By \cite{miclo1997remarques}, we always have $\rho(\pi, P) \le \alpha(\pi, K)$ for $P=KK_\pi^*$. The following example shows that the gap can be arbitrarily large.
\begin{example}[A one-step Markov chain] \label{ex:one-step}
  Let $\cX$ be a finite set, $\pi$ be a distribution on $\cX$ with full support.
  Let $\cY = \{*\}$ and $K: \cX \to \cY$ be the unique Markov kernel from $\cX$ to $\cY$.
  Then $P=KK_\pi^*$ satisfies $P(x,y)=\pi(y)$ for all $x,y\in \cX$.
  By \cite[Theorem A.1]{diaconis1996logarithmic},
  \begin{align}
    \rho(\pi, P) = \frac{1-2\pi_*}{\log(1/\pi_*-1)}
  \end{align}
  where $\pi_* = \min_{x\in \cX} \pi(x)$.
  On the other hand, $\alpha(\pi, K) = 1-\eta_{\KL}(\pi, K) = 1$.
  As $\pi_*\to 0$, we have $\frac{\alpha(\pi, K)}{\rho(\pi, P)} \to \infty$.
\end{example}
\subsection{Half-step entropy contraction \texorpdfstring{$\alpha$}{alpha} vs full-step entropy contraction \texorpdfstring{$\delta$}{delta}} \label{sec:comparison:alpha-vs-delta}
By the data processing inequality, we always have $\alpha(\pi, K) \le \delta(\pi, P)$ for $P=KK_\pi^*$.
\cref{ex:complete} with $\ell=2$ already shows that the gap can be arbitrarily large. Below we present a few different examples.
\begin{example}[A $1$-to-$k$ Markov chain] \label{ex:1-to-k}
  Let $\cX=[n]$, $\cY = [n]^k$, $K(x,y) = \frac 1{kn^{k-1}} \sum_{j\in [k]} \mathbbm1\{y_j=x\}$,
  $\pi = \Unif(\cX)$. In other words, on input $x\in \cX$, this chain generates a uniform length-$k$ output string and then randomly overwrite one of the $k$ positions with $x$.
  Then $K_\pi^*: \cY \to \cX$ satisfies $K_\pi^*(y,x) = \frac 1k \sum_{j\in [k]} \mathbbm1\{y_j=x\}$.
  Let $P=KK_\pi^*$.
  The motivation for this chain comes from analysis of the Glauber dynamics on $\Unif\left(\cX^k\right)$.
  The Markov kernel $K_\pi^*$ is the $k$-to-$1$ walk, and its entropy contraction is called entropic independence in \cite{AJKPV22}.
  \cref{prop:1-to-k} shows that for constant $k\ge 2$, as $n\to \infty$, we have $\frac{\delta(\pi, P)}{\alpha(\pi, K)} \to \infty$.
\end{example}

\begin{proposition}[A $1$-to-$k$ Markov chain] \label{prop:1-to-k}
  Let $\pi, K, P$ be as in \cref{ex:1-to-k}.
  Then we have $\alpha(\pi, K) = O\left(\frac{1}{\log n}\right)$ and $\delta(\pi, P) \ge 1-\frac 1k$.
  In particular, for fixed $k\ge 2$, $\frac{\delta(\pi, P)}{\alpha(\pi, K)} = \Omega(\log n)$.
\end{proposition}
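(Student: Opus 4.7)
The plan has two independent parts, one for each bound.

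For the lower bound on $\delta$, I first compute $P=KK_\pi^*$ explicitly. By the description of $K$ and $K_\pi^*$, starting from $x$ we pick a uniformly random position $j\in[k]$, fill it with $x$ and the remaining $k-1$ positions with $\Unif([n])$, and then return the coordinate $y_i$ for a uniformly random $i\in[k]$. Conditioning on whether $i=j$ (probability $1/k$) or $i\ne j$ (in which case the output is $\Unif([n])$ independent of $x$), I get the one-line identity
\begin{align}
  \nu P = \tfrac{1}{k}\nu + \tfrac{k-1}{k}\pi, \qquad \forall \nu\in\cP(\cX).
\end{align}
Convexity of $D(\cdot\|\pi)$ in the first argument then gives $D(\nu P\|\pi)\le \tfrac{1}{k}D(\nu\|\pi)$, i.e.\ $\eta_{\KL}(\pi,P)\le 1/k$, which is equivalent to $\delta(\pi,P)\ge 1-\tfrac{1}{k}$.

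For the upper bound on $\alpha$, I exhibit a single bad input distribution, namely the point mass $\nu=\delta_1$, for which the one-step KL contraction under $K$ is small. Clearly $D(\nu\|\pi)=\log n$. For $\nu K$ I compute
\begin{align}
  \frac{\nu K(y)}{\pi K(y)} = \frac{n\,S(y)}{k}, \qquad S(y)=|\{j\in[k]:y_j=1\}|,
\end{align}
so $D(\nu K\|\pi K)=\log(n/k)+\bE_{\nu K}[\log S(y)]$. Under $\nu K$, one coordinate is deterministically $1$ and the other $k-1$ coordinates are i.i.d.\ $\Unif([n])$, so $S(y)=1+B$ with $B\sim\mathrm{Binomial}(k-1,1/n)$. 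A crude estimate $\bE[\log(1+B)]\le \bE[B]=(k-1)/n=O(1/n)$ then gives $D(\nu K\|\pi K)=\log n-\log k+O(1/n)$. Plugging into the definition of $\alpha$,
\begin{align}
  \alpha(\pi,K)\le \frac{D(\nu\|\pi)-D(\nu K\|\pi K)}{D(\nu\|\pi)}=\frac{\log k+O(1/n)}{\log n}=O\!\left(\frac{1}{\log n}\right).
\end{align}

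The two bounds combine to yield $\delta/\alpha=\Omega(\log n)$ for fixed $k\ge 2$. Neither step is technically difficult; the only thing to watch is that the identity $\nu P=\tfrac{1}{k}\nu+\tfrac{k-1}{k}\pi$ really does follow from the coupling above, and that the estimate $\bE[\log(1+B)]=O(1/n)$ is enough (the point is that $B=0$ with probability $1-O(1/n)$, so any sub-linear estimate would suffice here).
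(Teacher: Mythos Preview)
Your proof is correct. The upper bound on $\alpha$ uses the same point-mass test distribution as the paper, with a slightly slicker computation via the coupling description $S=1+B$, $B\sim\mathrm{Bin}(k-1,1/n)$.

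For the lower bound on $\delta$, you take a genuinely different and more direct route than the paper. The paper never computes $P$ explicitly; instead it uses the data processing inequality $\eta_{\KL}(\pi,P)\le \eta_{\KL}(\pi K,K_\pi^*)\le \eta_{\KL}(\pi K,M)$ for an auxiliary kernel $M:\cY\to[k]\times[n]$ defined by $M(y,\cdot)=\Unif(\{(i,y_i):i\in[k]\})$, and then bounds the last quantity by $1/k$ using subadditivity of relative entropy for the product measure $\pi K=\pi^{\times k}$. Your observation that $P=\tfrac{1}{k}\,\mathrm{Id}+\tfrac{k-1}{k}\,\mathbbm{1}\pi$ (equivalently $\nu P=\tfrac{1}{k}\nu+\tfrac{k-1}{k}\pi$) plus convexity of $D(\cdot\|\pi)$ gives the same bound in one line. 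The paper's route generalizes more readily to situations where $P$ does not admit such a clean closed form (it works at the level of the half-step $K_\pi^*$), but for this particular chain your argument is cleaner.
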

\begin{proof}
  \textbf{Upper bound on $\alpha(\pi, K)$.}
  Let $\nu$ be the point distribution at $1\in \cX$.
  Then $D(\nu \| \pi) = \log n$.
  Consider the distribution $\nu K$.
  For $y\in \cY$, if $y$ contains $i$ copies of $1$, then $(\nu K)(y) = \frac{i}{k n^{k-1}}$.
  So
  \begin{align}
    D(\nu K \| \pi K) &=
    \sum_{1\le i\le k} \binom k i (n-1)^{k-i} \cdot \frac{i}{k n^{k-1}} \log \frac{n i}{k} \\
    \nonumber &= \log n - \sum_{1\le i\le k} \binom {k-1}{i-1} \frac{(n-1)^{k-i}}{n^{k-1}} \log \frac ki.
  \end{align}
  For constant $k\ge 2$, as $n\to \infty$, we have $D(\nu K \| \pi K) = \log n - \Theta(1)$.
  Therefore
  \begin{align} \label{eqn:1-to-k-alpha-upper-bound}
    \alpha(\pi, K) \le 1-\frac{D(\nu K \| \pi K)}{D(\nu \| \pi)} = \Theta\left(\frac{1}{\log n}\right).
  \end{align}

  \textbf{Lower bound on $\delta(\pi, P)$.}
  Let $M: \cY \to [k]\times [n]$ be the Markov kernel defined as $M(y,\cdot) = \Unif(\{(i,y_i) : i\in [k]\})$.
  By the data processing inequality,
  \begin{align}
    1-\delta(\pi, P) = \eta_{\KL}(\pi, P) \le \eta_{\KL}(\pi K, K_\pi^*) \le \eta_{\KL}(\pi K, M).
  \end{align}
  Let $\nu$ be any distribution on $\cY$, and $\nu_i$ ($i\in [k]$) be the $i$-th marginal of $\nu$.
  Then
  \begin{align}
    D(\nu \| \pi K) = D\left(\nu \| \pi^{\times k}\right)
    = D(\nu \| \nu_1 \times \cdots \times \nu_k) + \sum_{i\in [k]} D(\nu_i\| \pi)
    \ge \sum_{i\in [k]} D(\nu_i\| \pi).
  \end{align}
  On the other hand,
  \begin{align}
    D(\nu M \| \pi K M) = D(\nu M \| \Unif([k]) \times \pi) = \frac 1k \sum_{i\in [k]} D(\nu_i \| \pi).
  \end{align}
  Therefore
  \begin{align}
    \eta_{\KL}(\pi, M) \le \frac 1k.
  \end{align}
  So
  \begin{align} \label{eqn:1-to-k-delta-lower-bound}
    \delta(\pi, P) \ge 1-\frac 1k.
  \end{align}
\end{proof}

\begin{example}[Bernoulli-Laplace model] \label{ex:bernoulli-laplace}
  Let $n$ be a positive integer and $1\le k\le n-1$.
  We define a graph $G=(V,E)$.
  Let $V=\binom{[n]}k$ (i.e., size-$k$ subsets of of [n]).
  Equivalently, $V$ is the set of length-$n$ bit strings with Hamming weight $k$.
  There is an edge $(x,y)$ for $x,y\in V$ if and only if $\|x-y\|_1 = 2$ (considered as elements in $\{0,1\}^n$).
  The Bernoulli-Laplace model is the lazy random walk on $G$ (\cref{defn:random-walk-markov-chain}).
  That is, $\cX=V$, $\pi=\Unif(\cX)$, $\cY=E$, $K: \cX \to \cY$ is defined as $K(x,e)=\frac 1{k(n-k)} \mathbbm1\{x\in e\}$.
  For $(ij)\in \binom{[n]}{2}$, define map $\sigma_{ij}: \cX \to \cX$ by swapping the $i$-th coordinate and the $j$-th coordinate (under the bit string interpretation).
  Then
  \begin{align}
    P(x,\cdot) = \frac 12 \mathbbm1_{x} + \frac 1{2k(n-k)} \sum_{\substack{i\in x \\ j\in [n]\backslash x}} \mathbbm 1_{\sigma_{ij}(x)}.
  \end{align}
  \cref{prop:bernoulli-laplace} shows that for constant $k\ge 1$, as $n\to \infty$, we have $\frac{\delta(\pi, P)}{\alpha(\pi, K)}\to \infty$.
\end{example}

\begin{proposition}[Bernoulli-Laplace model] \label{prop:bernoulli-laplace}
  Let $\pi, K, P$ be as in \cref{ex:bernoulli-laplace} with $1\le k\le n-1$.
  Then $\alpha(\pi, K) = O\left(\frac 1{\log \binom nk}\right)$ and $\delta(\pi, P) \ge \frac{n}{2k(n-k)}$.
  In particular, for constant $k\ge 1$, we have $\frac{\delta(\pi, P)}{\alpha(\pi, K)} = \Omega(\log n)$.
\end{proposition}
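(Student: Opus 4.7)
The plan is to test the half-step SDPI at a point distribution. Let $\nu$ be the point distribution at an arbitrary $x_0\in\cX$, so $D(\nu\|\pi)=\log\binom{n}{k}$. By vertex-transitivity of the Bernoulli--Laplace graph, $\pi K=\Unif(E)$ with $|E|=\binom{n}{k}\,k(n-k)/2$, while $\nu K$ is uniform on the $k(n-k)$ edges incident to $x_0$. A direct computation gives $D(\nu K\|\pi K)=\log(|E|/(k(n-k)))=\log\binom{n}{k}-\log 2$, so the definition of $\alpha$ yields $\alpha(\pi,K)\le 1-D(\nu K\|\pi K)/D(\nu\|\pi)=(\log 2)/\log\binom{n}{k}$.

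\textbf{Lower bound on $\delta(\pi,P)$: reduction to the non-lazy chain.} I would decompose the lazy kernel as $P=\tfrac12 I+\tfrac12 P'$, where $P'$ is the non-lazy swap kernel defined by $P'(x,\sigma_{ij}x)=1/(k(n-k))$ for $i\in x$, $j\notin x$. By convexity of the KL divergence in its first argument, $D(\nu P\|\pi)\le \tfrac12 D(\nu\|\pi)+\tfrac12 D(\nu P'\|\pi)$, which immediately implies $\delta(\pi,P)\ge \tfrac12\delta(\pi,P')$. It therefore suffices to prove $\delta(\pi,P')\ge n/(k(n-k))$.

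\textbf{Lower bound on $\delta(\pi,P)$: the sharp bound for $P'$.} For this step I would invoke a sharp entropy-contraction result for Bernoulli--Laplace. The kernel $P'$ is the canonical random walk on the bases of the uniform matroid $U_{n,k}$, equivalently the quotient of the random-transposition walk on $S_n$ by $S_k\times S_{n-k}$. Modern entropy-contraction results for such matroid walks -- obtained via the entropic-independence framework of Anari--Liu--Oveis Gharan--Vinzant and Anari--Jain--Koehler--Pham--Vuong, or equivalently via two-block factorization of entropy -- yield exactly the sharp constant $\delta(\pi,P')\ge n/(k(n-k))$, matching the spectral gap $\lambda(\pi,P')=n/(k(n-k))$. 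Combining with the reduction above gives $\delta(\pi,P)\ge n/(2k(n-k))$.

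\textbf{Main obstacle and conclusion.} The crux is the sharp lower bound $\delta(\pi,P')\ge n/(k(n-k))$: this value equals the Poincar\'e constant $\lambda(\pi,P')$, so softer techniques -- Dirichlet-form comparison against the complete graph on $\cX$ (which loses factors growing with $\min(k,n-k)$), or trying to route through the inequality $\delta\le\rho_0$ -- cannot deliver the required constant without extra work. Once both bounds are in hand, for constant $k\ge 1$ one has $\delta(\pi,P)/\alpha(\pi,K)\ge(n\log\binom{n}{k})/(2k(n-k)\log 2)=\Omega(\log n)$, which is the stated separation.
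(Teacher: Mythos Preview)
Your upper bound on $\alpha(\pi,K)$ is identical to the paper's: test at a point mass and compute $D(\nu K\|\pi K)=\log\binom{n}{k}-\log 2$.

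For the lower bound on $\delta(\pi,P)$ the paper takes a different route. It invokes the Wasserstein-to-entropy lemma of \cite{caputo2024entropy} (their Lemma~1): if $W_\infty(P(x,\cdot),P(y,\cdot))\le d(x,y)$ and $W_1(P(x,\cdot),P(y,\cdot))\le(1-\kappa)d(x,y)$ for all $x,y$, then $\delta(\pi,P)\ge\kappa$. They then exhibit an explicit coupling of $P(x,\cdot)$ and $P(y,\cdot)$ for adjacent $x,y$ that never moves more than distance $1$ and has expected displacement $1-\tfrac{n}{2k(n-k)}$, giving the stated bound directly.

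Your approach has a gap at the key step. The entropic-independence results you cite (ALOV, AJKPV) are formulated for the \emph{down--up walk} $P_{DU}=D_{k\to k-1}U_{k-1\to k}$, which satisfies $P_{DU}=\tfrac{1}{n-k+1}I+\tfrac{n-k}{n-k+1}P'$ and is therefore not the non-lazy swap kernel $P'$. Convexity applied to this decomposition gives $\delta(\pi,P_{DU})\ge\tfrac{n-k}{n-k+1}\,\delta(\pi,P')$, i.e.\ an \emph{upper} bound on $\delta(\pi,P')$ in terms of $\delta(\pi,P_{DU})$ --- the wrong direction. So ``entropic independence yields exactly $\delta(\pi,P')\ge n/(k(n-k))$'' is not substantiated; neither ALOV/AJKPV nor two-block factorization deliver the sharp constant for $P'$ as a black box. (The quotient-of-random-transpositions description does not help either: sharp $\delta$ for random transpositions is not among the known results --- what is known there is $\rho_0$, which sits above $\delta$.)

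Your idea is salvageable if you bypass $P'$. One checks that the lazy kernel itself satisfies
\[
P=\frac{n-k-1}{2(n-k)}\,I+\frac{n-k+1}{2(n-k)}\,P_{DU},
\]
a genuine convex combination for $1\le k\le n-1$, whence $\delta(\pi,P)\ge\tfrac{n-k+1}{2(n-k)}\,\delta(\pi,P_{DU})$. If you then cite a precise entropy-contraction bound for $P_{DU}$ from the entropic-independence literature (and you must pin down which theorem and which constant), you get $\delta(\pi,P)=\Omega(1/k)$, sufficient for the $\Omega(\log n)$ separation when $k$ is constant, though with a slightly weaker constant than the paper's $\tfrac{n}{2k(n-k)}$. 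The paper's coupling argument is both more elementary and yields the sharper constant without external machinery.
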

\begin{proof}
  \textbf{Upper bound on $\alpha(\pi, K)$.}
  Let $\nu$ be the point distribution on any $x\in \cX$.
  Then $D(\nu \| \pi) = \log |\cX| = \log \binom nk$,
  \begin{align}
    \nu K =\frac 1{k(n-k)} \sum_{\substack{i\in x \\ j\in [n]\backslash x}} \mathbbm 1_{\{x,\sigma_{ij}(x)\}}.
  \end{align}
  Because $\pi K = \Unif(\cY)$,
  \begin{align}
    D(\nu K \| \pi K) = \log |\cY| - \log (k(n-k)) = \log \binom nk - \log 2.
  \end{align}
  Therefore
  \begin{align} \label{eqn:bernoulli-laplace-alpha-upper-bound}
    \alpha(\pi, K) \le 1-\frac{D(\nu K \| \pi K)}{D(\nu \| \pi)} = \frac{\log 2}{\log \binom nk}.
  \end{align}

  \textbf{Lower bound on $\delta(\pi, P)$.}
  We make use of the following useful result from \cite{caputo2024entropy}.
  \begin{lemma}[{\cite[Theorem 1]{caputo2024entropy}}] \label{lem:w-infty-plus-w-one-imply-delta-lower-bound}
    Let $d$ be a metric on $\cX$.
    Let $W_p$ denote the Wasserstein $p$-distance on $\cP(\cX)$
    If
    \begin{align}
      W_\infty(P(x,\cdot),P(y,\cdot)) &\le d(x,y), \qquad \forall x,y\in \cX, \label{eqn:w-infty-contraction}\\
      W_1(P(x,\cdot),P(y,\cdot)) &\le (1-\kappa)d(x,y), \qquad \forall x,y\in \cX, \label{eqn:w-1-contraction}
    \end{align}
    then
    \begin{align}
      \delta(\pi, P) \ge \kappa.
    \end{align}
  \end{lemma}

  For the Bernoulli-Laplace model, we let $d$ be the graph distance on $V=\binom{[n]}k$.
  To prove \cref{eqn:w-infty-contraction,eqn:w-1-contraction}, it suffices to prove the result for adjacent $x$ and $y$.
  By symmetry, WLOG assume that $x=\{1,3,4,\ldots,k+1\}$, $y=\{2,3,\ldots,k+1\}$.
  We define a coupling between $P(x,\cdot)$ and $P(y,\cdot)$ such that \cref{eqn:w-infty-contraction,eqn:w-1-contraction} are both satisfied.

  \begin{enumerate}[label=(\arabic*)]
    \item \label{item:bernoulli-laplace-coupling:i} For $3\le i\le k+1$, $k+2\le j\le n$, couple $\sigma_{ij}(x)$ with $\sigma_{ij}(y)$. This happens with probability $\frac{(k-1)(n-k-1)}{2k(n-k)}$ and incurs distance $1$.
    \item \label{item:bernoulli-laplace-coupling:ii} For $k+2\le j\le n$, couple $\sigma_{1j}(x)$ with $\sigma_{2j}(y)$. This happens with probability $\frac{n-k-1}{2k(n-k)}$ and incurs distance $0$.
    \item \label{item:bernoulli-laplace-coupling:iii} For $3\le i\le k+1$, couple $\sigma_{2i}(x)$ with $\sigma_{1i}(y)$. This happens with probabilities $\frac{k-1}{2k(n-k)}$ and incurs distance $0$.
    \item \label{item:bernoulli-laplace-coupling:iv} Couple $\sigma_{12}(x)$ with $y$, and $x$ with $\sigma_{12}(y)$ each with weight $\frac 1{2k(n-k)}$. This happens with probability $\frac{1}{k(n-k)}$ and incurs distance $0$.
    \item \label{item:bernoulli-laplace-coupling:v} At this point, all remaining mass in $P(x,\cdot)$ (resp.~$P(y,\cdot)$) is at $x$ (resp.~$y$). Couple them directly. This happens with probability $\frac 12 - \frac{1}{2k(n-k)}$ and incurs distance $1$.
  \end{enumerate}
  To summarize, the coupling has distance at most one and expected distance $1-\frac{n}{2k(n-k)}$.
  By \cref{lem:w-infty-plus-w-one-imply-delta-lower-bound}, we have
  \begin{align} \label{eqn:bernoulli-laplace-delta-lower-bound}
    \delta(\pi, P) \ge \frac{n}{2k(n-k)}.
  \end{align}
\end{proof}

We remark that for the Bernoulli-Laplace model, the exact value of $\alpha(\pi, K)$ has been determined in \cite[Theorem 1.12]{bristiel2024entropy}, where it is shown that \cref{eqn:bernoulli-laplace-alpha-upper-bound} is tight.
For $\delta(\pi, P)$, by considering a point distribution at any $x\in \cX$, we have
\begin{align} \label{eqn:bernoulli-laplace-delta-upper-bound}
  \delta(\pi, P) \le \frac{\log(2n(n-k))}{2 \log \binom nk}.
\end{align}
Therefore, for $n,k$ satisfying $\log k = (1-\Omega(1))\log n$, we have $\delta(\pi, P) = \Theta\left(\frac 1k\right)$.

\cref{ex:1-to-k,ex:bernoulli-laplace} show that there is a separation between $\alpha(\pi, K)$ and $\delta(\pi, P=KK_\pi^*)$. In these examples, $K_\pi^*$ can be quite different from $K$.
One natural question is whether there is a separation between $\delta(\pi, P)$ and $\delta\left(\pi, P^2\right)$. That is, can running the same Markov kernel twice result in much better contraction than running only once?
The following example shows that indeed such a separation exists. This example is adapted from \cite{Mun23}'s counterexample to the Peres-Tetali conjecture. We note, however, that the properties of this chain we use here and in \cref{ex:three-state-delta-vs-rho0} are different from those used op.~cit.

\begin{example}[A three-state Markov chain] \label{ex:three-state-alpha-vs-delta}
  Let $M$ be a positive real number.
  Let $\cX=[3]$, $\pi = \left( \frac M{M+2}, \frac 1{M+2}, \frac 1{M+2} \right)$.
  Let $P: \cX \to \cX$ be defined as
  \begin{align}
    P=\begin{pmatrix}
      1-\frac 1{4M} & \frac 1{4M} & 0 \\
      \frac 14 & \frac 12 & \frac 14 \\
      0 & \frac 14 & \frac 34
    \end{pmatrix}.
  \end{align}
  It is easy to see that $\pi$ is the invariant distribution of $P$ and $(\pi, P)$ is reversible.
  Note that by \cref{lem:lazy-imply-factor}, $P$ is factorizable.
  By \cref{prop:three-state-alpha-vs-delta}, $\frac{\delta\left(\pi, P^2\right)}{\delta(\pi, P)} \to \infty$ as $M\to \infty$.
\end{example}
\begin{proposition}[A three-state Markov chain] \label{prop:three-state-alpha-vs-delta}
  Let $\pi, P$ be as in \cref{ex:three-state-alpha-vs-delta}. Then $\delta(\pi, P) = O\left( \frac 1{\log M} \right)$ and $\delta\left(\pi, P^2\right) = \Omega(1)$.
  In particular, $\frac{\delta\left(\pi, P^2\right)}{\delta(\pi, P)} = \Omega(\log M)$.
\end{proposition}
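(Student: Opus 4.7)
The proof naturally splits into the two claimed bounds.

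For the upper bound $\delta(\pi, P) = O(1/\log M)$, I would simply test the definition against $\nu$, the point mass at state $3\in \cX$. Then $D(\nu\|\pi) = \log(M+2)$, while $\nu P = (0, 1/4, 3/4)$, so that
\begin{align*}
D(\nu P\|\pi) = \frac{1}{4}\log\frac{M+2}{4} + \frac{3}{4}\log\frac{3(M+2)}{4} = \log(M+2) - H,
\end{align*}
where $H := 2\log 2 - \tfrac{3}{4}\log 3 > 0$ is an absolute constant. Hence $\delta(\pi, P) \le H/\log(M+2) = O(1/\log M)$.

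For the lower bound $\delta(\pi, P^2) = \Omega(1)$, the plan is to exploit the Dobrushin-type inequality
\begin{align*}
\delta(\pi, P^2) \ge 1 - \eta_{\TV}(\pi, P^2) = 1 - \max_{x,y\in \cX} \TV(P^2(x,\cdot), P^2(y,\cdot)),
\end{align*}
which follows from~\eqref{eqn:eta-chi2-kl-tv-comparison} combined with Dobrushin's formula~\eqref{eqn:dobrushin-const-value}. I would then write out the $3\times 3$ matrix $P^2$ explicitly in terms of $\epsilon := 1/(4M)$, noting in particular that $P^2(3,\cdot) = (\tfrac{1}{16}, \tfrac{5}{16}, \tfrac{10}{16})$ is independent of $M$, while $P^2(1,\cdot) \to (1,0,0)$ and $P^2(2,\cdot)\to(\tfrac38,\tfrac{5}{16},\tfrac{5}{16})$ as $\epsilon\to 0$. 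Comparing the three pairs of rows, the maximum TV is attained between rows $1$ and $3$ with value $\tfrac{15}{16} - \tfrac{7\epsilon}{4} + \epsilon^2$, whereas the pairs $(1,2)$ and $(2,3)$ contribute $\tfrac{5}{8} - O(\epsilon)$ and $\tfrac{5}{16}$ respectively. This gives $\delta(\pi, P^2) \ge \tfrac{1}{16} + \tfrac{7\epsilon}{4} - \epsilon^2 = \Omega(1)$, which combined with the upper bound yields $\delta(\pi, P^2)/\delta(\pi, P) = \Omega(\log M)$.

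I anticipate no substantial obstacle, since both bounds ultimately reduce to direct computations with the explicit $3\times 3$ matrices $P$ and $P^2$. The only verification requiring care is that the $(1,3)$-pair really dominates the Dobrushin maximum, which amounts to the elementary inequality $\tfrac{7\epsilon}{4} - \epsilon^2 \le \tfrac{5}{16}$, valid uniformly for $\epsilon \le \tfrac{1}{4}$ (equivalently $M \ge 1$); for any smaller $M$ a uniform positive lower bound on $\delta(\pi, P^2)$ follows from irreducibility and continuity of the chain in $M$, which is more than enough since the claim concerns the $M\to\infty$ regime.
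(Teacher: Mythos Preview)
Your proposal is correct and follows essentially the same approach as the paper: the upper bound via the point mass at state $3$ is identical, and the lower bound via Dobrushin's coefficient and $\eta_{\KL}\le\eta_{\TV}$ is the same idea. The paper's execution of the lower bound is a bit slicker---rather than computing the full matrix $P^2$, it simply observes that $P^2(x,1)=\Omega(1)$ for every $x$ (via $P(1,1)^2$, $P(2,1)P(1,1)$, $P(3,2)P(2,1)$), which already forces $\eta_{\TV}(\pi,P^2)=1-\Omega(1)$. One small slip: your verification inequality $\tfrac{7\epsilon}{4}-\epsilon^2\le\tfrac{5}{16}$ actually fails at $\epsilon=\tfrac14$ (it equals $\tfrac{3}{8}$ there), but this is harmless since the $\Omega(1)$ conclusion does not require identifying which pair attains the maximum.
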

\begin{proof}
  \textbf{Upper bound on $\delta(\pi, P)$.}
  Let $\nu$ be the point distribution at $3\in \cX$.
  Then $D(\nu \| \pi) = \log (M+2)$,
  \begin{align}
    D(\nu P \| \pi) = \frac 14 \log \frac{M+2}{4} + \frac 34 \log \frac{3(M+2)}{4} = \log (M+2) - h(1/4)
  \end{align}
  where $h(x): [0,1]\to [0,\log 2]$ is the binary entropy function
  \begin{align}
    h(x) = -x\log x - (1-x)\log (1-x).
  \end{align}
  Therefore
  \begin{align} \label{eqn:three-state-delta-P-upper-bound}
    \delta(\pi, P) \le 1-\frac{D(\nu P \| \pi)}{D(\nu \| \pi)} = O\left(\frac 1{\log M}\right).
  \end{align}

  \textbf{Lower bound on $\delta\left(\pi, P^2\right)$.}
  We prove that for large $M$, we have $P^2(x,1) = \Omega(1)$ for all $x\in \cX$.
  In fact, $P^2(1,1) \ge P(1,1)^2 = \Omega(1)$, $P^2(2,1) \ge P(2,1)P(1,1) = \Omega(1)$, and $P^2(3,1) \ge P(3,2) P(2,1) = \Omega(1)$.
  So $\TV(P^2(x,\cdot),P^2(x',\cdot)) = 1-\Omega(1)$ for all $x,x'\in \cX$.
  By \cref{eqn:dobrushin-const-value}, the Dobrushin's coefficient satisfies $\eta_{\TV}(\pi, P^2) = 1-\Omega(1)$.
  By \cref{eqn:eta-chi2-kl-tv-comparison}, we have
  \begin{align} \label{eqn:three-state-delta-PP-lower-bound}
    \delta\left(\pi, P^2\right) \ge 1-\eta_{\TV}(\pi, P^2) = \Omega(1).
  \end{align}
\end{proof}

We generalize \cref{ex:three-state-alpha-vs-delta} as follows, showing that for any positive
integer $m$, there exists a Markov kernel such that running $(m+1)$ steps results in entropy
contraction much better than running $m$ steps.
We note that there is a relatively simple characterization of the LSI for birth-death chains (\cite{chen2005poincare,chen2003variational}), but for MLSI or SDPI no such characterizations are known, except for partial progress in \cite{roberto2001inegalites,caputo2009convex}.
\begin{example}[A birth-death Markov chain] \label{ex:birth-death}
  We fix a positive integer $m$ and let $M$ be a large positive real number.
  Let $\cX = [m+2]$, $\pi(x)= \frac 1{M+m+1} + \mathbbm1\{x=1\} \frac{M-1}{M+m+1}$.
  Let $P: \cX \to \cX$ be a birth-death Markov chain, where $P(x,y)=0$ for $|x-y|\ge 2$, $P(x,x-1)=\frac 14$ for $2\le x\le m+2$, $P(x,x+1)=\frac 14$ for $2\le x\le m+1$, $P(1,2)=\frac 1{4M}$, and $P(x,x)=1-P(x,x-1)-P(x,x+1)$.
  It is easy to verify that $(\pi, P)$ is a reversible pair and $P(x,x)\ge \frac 12$ for all $x\in [m+2]$.
  By \cref{lem:lazy-imply-factor}, $(\pi, P)$ is factorizable.
  \cref{prop:birth-death} shows that as $M\to \infty$, we have $\frac{\delta\left(\pi, P^{m+1}\right)}{\delta\left(\pi, P^m\right)} \to \infty$.
\end{example}
\begin{proposition}[A birth-death Markov chain] \label{prop:birth-death}
  Let $\pi, P$ be as in \cref{ex:birth-death}.
  Then $\delta\left(\pi, P^m\right) = O\left( \frac 1{\log M}\right)$ and $\delta\left(\pi, P^{m+1}\right) = \Omega(1)$.
  In particular, $\frac{\delta\left(\pi, P^{m+1}\right)}{\delta\left(\pi, P^m\right)} = \Omega(\log M)$.
\end{proposition}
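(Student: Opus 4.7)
The plan is to mirror the strategy used in \cref{prop:three-state-alpha-vs-delta}: upper bound $\delta(\pi, P^m)$ by exhibiting a single bad input distribution, and lower bound $\delta(\pi, P^{m+1})$ via the Dobrushin coefficient.

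\textbf{Upper bound on $\delta(\pi,P^m)$.} Take $\nu$ to be the point distribution at the farthest state $x_0 = m+2$. Since $P$ is a birth-death chain, from $m+2$ one needs at least $m+1$ steps to visit state $1$; hence $\nu P^m$ is supported on $\{2,3,\dots,m+2\}$. Note that $\pi(y) = 1/(M+m+1)$ for every $y\neq 1$, so
\begin{align}
  D(\nu P^m \| \pi) = \log(M+m+1) - H(\nu P^m),
\end{align}
where $H$ is the Shannon entropy of $\nu P^m$. Since $\nu P^m$ is supported on at most $m+1$ points, $H(\nu P^m) \le \log(m+1)$. Combining this with $D(\nu\|\pi) = \log(M+m+1)$ gives
\begin{align}
  \delta(\pi, P^m) \le 1 - \frac{D(\nu P^m \| \pi)}{D(\nu \| \pi)} \le \frac{\log(m+1)}{\log(M+m+1)} = O\!\left(\frac 1{\log M}\right)
\end{align}
for fixed $m$.

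\textbf{Lower bound on $\delta(\pi,P^{m+1})$.} The strategy, as in the three-state example, is to show that for every starting point $x \in \cX$ the transition kernel $P^{m+1}(x,\cdot)$ puts $\Omega(1)$ mass on state $1$, uniformly in $M$, so that the Dobrushin coefficient of $P^{m+1}$ is bounded away from $1$; then \cref{eqn:eta-chi2-kl-tv-comparison} and \cref{eqn:dobrushin-const-value} finish the job exactly as in \cref{eqn:three-state-delta-PP-lower-bound}. For each $x \in [m+2]$, consider the explicit path that first descends step-by-step from $x$ to $1$ (taking $x-1$ steps, each with probability at least $\tfrac 14$) and then remains at $1$ for the remaining $m+2-x$ steps. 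The stickiness of state $1$, $P(1,1) = 1 - \tfrac 1{4M} \ge \tfrac 12$ for $M \ge 1$, means each holding step contributes at least $\tfrac 12$. This yields
\begin{align}
  P^{m+1}(x,1) \;\ge\; \left(\tfrac 14\right)^{x-1} \left(\tfrac 12\right)^{m+2-x} \;\ge\; 2^{-(2m+2)},
\end{align}
which is $\Omega(1)$ for fixed $m$ and independent of $M$.

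\textbf{Conclusion and anticipated difficulty.} Both bounds are essentially routine given the tools already established. The only potentially delicate point is verifying that the support-size argument in the upper bound really forces the KL drop to be $O(1)$ (as opposed to a lower-order correction that could spoil the $O(1/\log M)$ rate), but this is handled by the exact identity $D(\nu P^m \|\pi) = \log(M+m+1) - H(\nu P^m)$ together with the crude bound $H \le \log(m+1)$. No finer analysis of the birth-death mixing is required.
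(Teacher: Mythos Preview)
Your proof is correct and follows essentially the same approach as the paper's: the same point mass at $m+2$ for the upper bound, and the same path-to-state-$1$ plus Dobrushin argument for the lower bound. Your upper bound is in fact slightly cleaner than the paper's, since you use the direct estimate $H(\nu P^m)\le \log(m+1)$ rather than arguing that each coordinate of $\nu P^m$ is $\Theta_m(1)$.
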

\begin{proof}
  \textbf{Upper bound on $\delta\left(\pi, P^m\right)$.}
  Let $\nu$ be the point distribution at $m+2\in \cX$. Then $D(\nu \| \pi) = \log (M+m+1)$.
  Note that $(\nu P^m)(1)=0$, $(\nu P^m)(x) = c_x$ for some $c_x = \Theta_m(1)$ for $2\le x\le m+2$, where $\Theta_m$ hides a constant factor depending on $m$. Furthermore, $(c_2,\ldots,c_{m+2})$ is a distribution on $\{2,\ldots,m+2\}$.
  Then
  \begin{align}
    D(\nu P \| \pi) = \log (M+m+1) - H(c_2,\ldots,c_{m+2})
  \end{align}
  where $H$ is the entropy function
  \begin{align}
    H(c_2,\ldots,c_{m+2}) = -\sum_{2\le i\le m+2} c_i \log c_i.
  \end{align}
  Because $c_i=\Theta_m(1)$ for all $2\le i\le m+2$, we have $H(c_2,\ldots,c_{m+2}) = \Theta_m(1)$.
  Therefore
  \begin{align} \label{eqn:birth-death-delta-m-upper-bound}
    \delta\left(\pi, P^m\right) \le 1-\frac{D(\nu P\| \pi)}{D(\nu \| \pi)} = \Theta_m\left(\frac 1{\log M}\right).
  \end{align}

  \textbf{Lower bound on $\delta\left(\pi, P^{m+1}\right)$.}
  Note that for any $x\in [m+2]$, we have
  \begin{align}
    P^{m+1}(x,1) \ge P(x,x-1)\cdots P(2,1) \cdot P(1,1)^{m+1-x} = \Omega_m(1)
  \end{align}
  where $\Omega_m$ hides a constant factor depending on $m$.
  By \cref{eqn:dobrushin-const-value,eqn:eta-chi2-kl-tv-comparison},
  \begin{align} \label{eqn:birth-death-delta-m1-lower-bound}
    \delta\left(\pi, P^{m+1}\right) \ge 1-\eta_{\TV}(\pi, P^{m+1}) = \Omega_m(1).
  \end{align}
\end{proof}

If a Markov kernel $(\pi, P)$ separates $\delta\left(\pi, P^m\right)$ and $\delta\left(\pi, P^{m+1}\right)$, then it also separates $\delta(\pi, T_t)$ (where $T_t=e^{t(P-\mathrm{Id})}$) and $\delta\left(\pi, P^m\right)$ for finite $t$.
In other words, the continuous-time chain contracts entropy at finite time better than the discrete-time counterpart.
To see this, note that for any function $f$ on $\cX$, we have
\begin{align}
  \Ent_\pi(T_t f) &\le \bE_{n\sim \Pois(t)} \Ent_\pi\left(P^n f\right) \\
  \nonumber &\le \bE_{n\sim \Pois(t)}\left[ \mathbbm{1}\{n\le m\} \Ent_\pi(f) + \mathbbm{1}\{n\ge m+1\} \Ent_\pi\left(P^{m+1} f\right)\right]\\
  \nonumber &\le \bP[\Pois(t)\le m] \Ent_\pi(f) + \bP[\Pois(t) \ge m+1] \left(1-\delta\left(\pi, P^{m+1}\right)\right) \\
  \nonumber &\le \Ent_\pi(f) \left(1 - \bP[\Pois(t)\ge m+1] \delta\left(\pi, P^{m+1}\right)\right),
\end{align}
where the first step is by convexity of $\Ent_\pi$, and the second step is by the data processing inequality.
Therefore
\begin{align}
  \delta(\pi, T_t) \ge \bP[\Pois(t)\ge m+1] \delta\left(\pi, P^{m+1}\right),
\end{align}
which is separated from $\delta\left(\pi, P^m\right)$ for finite $t$, assuming that $\delta\left(\pi, P^{m+1}\right)$ is separated from $\delta\left(\pi, P^m\right)$.

\subsection{Full-step entropy contraction \texorpdfstring{$\delta$}{delta} vs modified log-Sobolev constant \texorpdfstring{$\rho_0$}{rho0}} \label{sec:comparison:delta-vs-rho0}
By \cite[Lemma 2.7]{blanca2021entropy}, we always have $\delta(\pi, P) \le \rho_0(\pi, P)$ for any reversible
$(\pi, P)$.\footnote{They in fact prove a more general statement that works for non-reversible $(\pi, P)$.}
If we allow non-factorizable $(\pi, P)$, then it is easy to give an example where the gap is infinite: take $\cX=[2]$, $\pi=\Unif(\cX)$, and $P=\begin{pmatrix}
  0 & 1 \\ 1 & 0
\end{pmatrix}$.
The following example shows that the gap can be arbitrarily large even if we restrict to factorizable $(\pi, P)$.
\begin{example}[A three-state Markov chain] \label{ex:three-state-delta-vs-rho0}
  This example is the same as \cref{ex:three-state-alpha-vs-delta}.
  \cref{prop:three-state-delta-vs-rho0} shows that we have $\frac{\rho_0(\pi, P)}{\delta(\pi, P)} \to \infty$ as $M\to \infty$.
\end{example}
\begin{proposition}[A three-state Markov chain] \label{prop:three-state-delta-vs-rho0}
  Let $(\pi, P)$ be as in \cref{ex:three-state-alpha-vs-delta}.
  Then $\delta(\pi, P) = O\left(\frac 1{\log M}\right)$ and $\rho_0(\pi, P) = \Theta\left(\frac{\log \log M}{\log M}\right)$.
  In particular, $\frac{\rho_0(\pi, P)}{\delta(\pi, P)} = \Omega(\log \log M)$.
\end{proposition}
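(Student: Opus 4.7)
The bound $\delta(\pi,P)=O(1/\log M)$ is exactly the calculation already carried out in \cref{prop:three-state-alpha-vs-delta} using the point-mass density $\nu=\mathbf{1}_{x=3}/\pi(3)$; no new argument is needed on that side. I therefore focus on the two-sided bound for $\rho_0$.

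For the upper bound $\rho_0\le O(\log\log M/\log M)$, I would exhibit the explicit density
\[
  f(1)=\tfrac{2}{M},\qquad f(2)=\tfrac{M}{\log M},\qquad f(3)=M-\tfrac{M}{\log M},
\]
which is normalized so that $\pi[f]=1$. A direct expansion shows $\Ent_\pi(f)\sim\log M$, dominated by the state-$3$ contribution $\pi(3)f(3)\log f(3)$. The Dirichlet form has contributions only from edges $(1,2)$ and $(2,3)$, each weighted by $1/(4(M+2))$; the crucial $(2,3)$-bracket is $(f(2)-f(3))(\log f(2)-\log f(3))\sim M\log\log M$, since $|f(3)-f(2)|\sim M$ and $\log(f(3)/f(2))\sim\log\log M$, whereas the $(1,2)$-bracket is only $\sim 2M$ and hence of lower order. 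Therefore $\cE_{\pi,P}(f,\log f)\sim\log\log M/4$ and the ratio is $\Theta(\log\log M/\log M)$, giving the claimed upper bound.

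The matching lower bound $\rho_0\ge\Omega(\log\log M/\log M)$ is the main obstacle, since it must hold for every positive density $f$. The plan is a case analysis of the variational problem over normalized densities, using $m=\|f\|_\infty$ and, once $m$ is large, the ratio $r=\min(f(2),f(3))/\max(f(2),f(3))$. Since $f(1)\le(M+2)/M$, the maximum is attained at state $2$ or $3$ as soon as $m\ge 3$; WLOG $m=f(3)$. In the regime $m=O(1)$, normalization forces $f(1)=1+O(1/M)$, so that both $\Ent_\pi(f)$ and $\cE_{\pi,P}(f,\log f)$ are $\Theta(1/M)$ and the leading-order perturbative expansion around the constant function yields a ratio $\approx 2\lambda=\Theta(1)$. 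In the regime $m\to\infty$ with $r$ bounded below by a constant, $f(2)$ is of the same order as $m$ and the $(1,2)$-bracket alone is $\asymp m\log m$, giving a ratio of $\Theta(1)$. Both of these cases dominate the claimed bound.

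The genuinely delicate case is $m\to\infty$ with $r\to 0$. Here the $(2,3)$-bracket equals $(f(3)-f(2))\log(f(3)/f(2))\asymp m\log(1/r)$, so shrinking $r$ would appear to help; however, the normalization $Mf(1)+f(2)+f(3)=M+2$ ties $r$ to $f(1)$, and pushing $r$ below $1/\log M$ forces $f(2)=rm$ so small that the $(1,2)$-bracket $(f(1)-f(2))(\log f(1)-\log f(2))$ blows up and re-instates a lower bound on the ratio. Balancing these two competing edge contributions—a one-parameter Lagrange optimization whose extremizer has $m\asymp M$ and $r\asymp 1/\log M$—shows that the infimum over this regime is exactly $\Theta(\log\log M/\log M)$, attained asymptotically by the test function of the upper bound. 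Executing this optimization rigorously, and in particular ruling out anomalous minimizers at the boundaries of the parameter range, is where I expect the principal technical difficulty to lie.
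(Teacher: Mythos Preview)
Your outline is essentially correct and converges to the right extremal regime, but it is organized quite differently from the paper's proof of the $\rho_0$ lower bound. The paper normalizes by setting $f(2)=1$ (rather than $\pi[f]=1$), writes $f(1)=x$, $f(3)=y$, and first proves the pointwise inequality
\[
  \frac{\Ent_\pi(f)}{4\,\cE_{\pi,P}(f,\log f)}
  \le \frac{(2x-y-1)+y\log y+(y+1)\log(1/x)}{(x-1)\log x+(y-1)\log y},
\]
which strips the $M$-dependence out of the entropy term. The case split is then simply $(x,y)\in(\tfrac12,\tfrac32)^2$ versus its complement: near $(1,1)$ a second-order expansion gives ratio $\le 10$; away from $(1,1)$ the denominator is bounded below by $1/10$ and one bounds each numerator term separately. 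The one dangerous regime is $x$ small with $y$ large, where $(y+1)\log(1/x)$ could dominate; the paper handles this by invoking a lemma of Tikhomirov--Youssef \cite{tikhomirov2024regularized} allowing the a priori assumption $x\ge \beta/M$, so that $\log(1/x)=O(\log M)$ and one reads off the bound $O(\log M/\log\log M)$ once $y>\log M/\log\log M$. Your balancing argument between the two edge contributions is the same mechanism seen ``from the other side,'' but the paper's normalization plus the external lemma replaces your Lagrange optimization by a handful of elementary one-variable inequalities. For the upper bound, your test function is equivalent (after rescaling) to the paper's choice $f(1)=1/M$, $f(2)=1$, $f(3)=\log M$, due to M\"unch.

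Two small points worth tightening. First, the claim ``WLOG $m=f(3)$'' is not a symmetry, since only state $2$ borders the heavy state $1$; the case $m=f(2)$ is genuinely different but strictly easier (the $(1,2)$-edge alone already gives ratio $\Omega(1)$), so it should be dispatched explicitly rather than absorbed. Second, in your ``$m=O(1)$'' case the assertion that the perturbative expansion yields ratio $\approx 2\lambda$ only covers $f$ close to constant; when $f(2),f(3)$ are bounded but not near $1$, both numerator and denominator are $\Theta(1/M)$ for a more direct reason, and the ratio is again $\Theta(1)$.
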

\begin{proof}
  \textbf{Upper bound on $\delta(\pi, P)$.}
  We have established in \cref{prop:three-state-alpha-vs-delta} that $\delta(\pi, P) = O\left(\frac 1{\log M}\right)$.

  \textbf{Lower bound on $\rho_0(\pi, P)$.}
We will show that there exists a sufficiently large universal constant $C > 0$ such that, for any positive function $f: [3] \to \bR^+$, it holds
\begin{align}\label{eq:MLSI-delta-sep}
 \Ent_\pi(f) \le \frac{C \log M}{\log\log M} \cE_{\pi,P}(f,\log f).
\end{align}
This implies $\rho_0(\pi, P) = \Omega\left( \frac{\log\log M}{\log M} \right)$.
By applying suitable scaling, we assume that $f(1) = x$, $f(2) = 1$, and $f(3) = y$ for some $x,y > 0$.
Furthermore, we may assume without loss of generality that $x \ge \frac{\beta}{M}$ where $\beta > 0$ is some tiny absolute constant, due to \cite[Lemma 2.1]{tikhomirov2024regularized}.
More specifically, \cite[Lemma 2.1]{tikhomirov2024regularized} shows that to establish a modified log-Sobolev inequality \cref{eq:MLSI-delta-sep}, it suffices to consider a restricted class of functions such that, among other restrictions, $f(1) \ge \beta' \bE_\pi f$ for an absolute constant $\beta' > 0$; such a restriction immediately implies
\begin{align}
x = f(1) \ge \beta' \left( \frac{Mx}{M+2} + \frac{1}{M+2} + \frac{y}{M+2}  \right) \ge \frac{\beta'}{M+2} \ge \frac{\beta'}{3M}.
\end{align}
Hence, we can safely assume $x \ge \frac{\beta}{M}$ where $\beta = \beta'/3$.

A direct calculation yields
\begin{align}
(M+2) \cdot \Ent_\pi(f) = Mx \log x + y \log y - (Mx + y + 1) \log\left( \frac{Mx + y + 1}{M+2} \right),
\end{align}
and
\begin{align}
4(M+2) \cdot \cE_{\pi,P}(f,\log f) = (x-1)\log x + (y-1) \log y.
\end{align}
The following claim is helpful.
\begin{claim}\label{claim:ex-three-state-delta-vs-rho0:ratio}
We have
\begin{align}
\frac{\Ent_\pi(f)}{4\, \cE_{\pi,P}(f,\log f)}
\le \frac{(2x - y - 1) + y\log y + (y+1) \log\left( \frac{1}{x} \right)}{(x-1)\log x + (y-1) \log y}.
\end{align}
\end{claim}
\begin{proof}
We rewrite the entropy as
\begin{align}
(M+2) \cdot \Ent_\pi(f) &= Mx \log x + y \log y - (Mx + y + 1) \log\left( \frac{Mx + y + 1}{M+2} \right) \\
\nonumber &= (Mx + y + 1) \log\left( \frac{(M+2)x}{Mx + y + 1} \right) + y\log y + (y+1) \log\left( \frac{1}{x} \right).
\end{align}
Notice that the first term above can be controlled by
\begin{align}
(Mx + y + 1) \log\left( \frac{(M+2)x}{Mx + y + 1} \right)
&= (Mx + y + 1) \log\left( 1 + \frac{2x - y - 1}{Mx + y + 1} \right) \\
\nonumber &\le (Mx + y + 1) \cdot \frac{2x - y - 1}{Mx + y + 1}
= 2x - y - 1.
\end{align}
The claim then follows.
\end{proof}

We consider two separate cases of $(x,y)$ to establish \cref{eq:MLSI-delta-sep}.

\textbf{Case 1: $(x,y) \notin (\frac{1}{2},\frac{3}{2}) \times (\frac{1}{2},\frac{3}{2})$.}
In this case,
we have
\begin{align}
(x-1)\log x + (y-1) \log y \ge \frac{1}{10}.
\end{align}
Since we have
\begin{align}
2x - y - 1 \le 2x-1
\le 2(x-1)\log x + 3
\le 32 \left( (x-1)\log x + (y-1) \log y \right)
\end{align}
and also
\begin{align}
y\log y \le 2(y-1) \log y + 1 \le 12 \left( (x-1)\log x + (y-1) \log y \right),
\end{align}
we deduce from \cref{claim:ex-three-state-delta-vs-rho0:ratio} that
\begin{align}
\frac{\Ent_\pi(f)}{4\, \cE_{\pi,P}(f,\log f)}
\le 44 + \frac{(y+1) \log\left( \frac{1}{x} \right)}{(x-1)\log x + (y-1) \log y}.
\end{align}
Consider three subcases.

\begin{enumerate}[label=(\roman*)]
  \item If $x \ge \frac{3}{2}$, then $\log(1/x) < 0$ and hence
  \begin{align}
  \frac{\Ent_\pi(f)}{4\, \cE_{\pi,P}(f,\log f)}
  \le 44.
  \end{align}

  \item If $x \le \frac{1}{2}$, then consider how large $y$ is.
  If $y \le \frac{\log M}{\log\log M}$, then by $(x-1)\log x \ge \frac{1}{2} \log(1/x)$ and $(y-1)\log y \ge 0$ we deduce that
  \begin{align}
  \frac{(y+1) \log\left( \frac{1}{x} \right)}{(x-1)\log x + (y-1) \log y} \le 2(y+1)
  \le \frac{4\log M}{\log\log M}.
  \end{align}
  If $y > \frac{\log M}{\log\log M}$, then by $(x-1)\log x \ge 0$, $\frac{y+1}{y-1} \le 3$, and $x \ge \frac{\beta}{M}$ we deduce that
  \begin{align}
  \frac{(y+1) \log\left( \frac{1}{x} \right)}{(x-1)\log x + (y-1) \log y}
  \le \frac{(y+1) \log\left( \frac{1}{x} \right)}{(y-1) \log y}
  \le \frac{3 \log(\frac{M}{\beta})}{\log (\frac{\log M}{\log\log M})}
  \le \frac{C_0\log M}{\log\log M},
  \end{align}
  for some $C_0 = C_0(\beta) > 0$ when $M$ is sufficiently large.

  \item If $x \in (\frac{1}{2},\frac{3}{2})$, then by our assumption it must hold $y \notin (\frac{1}{2},\frac{3}{2})$.
  Since $\log(1/x) \le 1$ when $x > \frac{1}{2}$, we have
  \begin{align}
  \frac{(y+1) \log\left( \frac{1}{x} \right)}{(x-1)\log x + (y-1) \log y}
  \le \frac{y+1}{(y-1) \log y}
  \le 10,
  \end{align}
  when $y \notin (\frac{1}{2},\frac{3}{2})$.
\end{enumerate}

Therefore, in all three subcases we have
\begin{align}
\frac{\Ent_\pi(f)}{4\, \cE_{\pi,P}(f,\log f)}
\le \frac{C\log M}{\log\log M}
\end{align}
where $C = C(\beta) > 0$ is constant, whenever $M$ is sufficiently large.

\textbf{Case 2: $(x,y) \in (\frac{1}{2},\frac{3}{2}) \times (\frac{1}{2},\frac{3}{2})$.}
In this case,
we have
\begin{align}
(x-1)\log x + (y-1) \log y \ge \frac{1}{2} (x-1)^2 + \frac{1}{2} (y-1)^2,
\end{align}
and also
\begin{align}
&~(2x - y - 1) + y\log y + (y+1) \log\left( \frac{1}{x} \right) \\
\nonumber \le&~2(x-1) - (y-1) + y(y-1) + (y+1) \left( \frac{1}{x}-1 \right) \\
\nonumber =&~\frac{1}{x} (x-1)\left( 2x - y-1 \right) + (y-1)^2 \\
\nonumber =&~\frac{2}{x}(x-1)^2 - \frac{1}{x}(x-1)(y-1) + (y-1)^2 \\
\nonumber \le&~4(x-1)^2 + 2|(x-1)(y-1)| + (y-1)^2 \\
\nonumber \le&~5(x-1)^2 + 5(y-1)^2.
\end{align}
By \cref{claim:ex-three-state-delta-vs-rho0:ratio},
\begin{align}
\frac{\Ent_\pi(f)}{4\, \cE_{\pi,P}(f,\log f)} \le 10.
\end{align}

Combining the two cases, we conclude that
\begin{align} \label{eqn:three-state-rho0-lower-bound}
  \rho_0(\pi, P) = \Omega\left( \frac{\log \log M} {\log M}\right).
\end{align}

In fact, this lower bound on $\rho_0(\pi, P)$ is asymptotically tight and can be achieved by, for example, $f(1)=1/M$, $f(2)=1$, and $f(3)=\log M$ as given in \cite{Mun23}.
Therefore,
\begin{align}
  \rho_0(\pi, P) = \Theta\left( \frac{\log\log M}{\log M} \right)
\end{align}
as $M\to \infty$.

\end{proof}

\subsection{Modified log-Sobolev constant \texorpdfstring{$\rho_0$}{rho0} vs Poincar\'e constant \texorpdfstring{$\lambda$}{lambda}} \label{sec:comparison:rho0-vs-lambda}
\cite{bobkov2006modified} shows that $\rho_0(\pi, P) \le 2\lambda(\pi, P)$, and gave an example showing that the gap can be arbitrarily large. We record their example here.
\begin{example}[Expander graphs] \label{ex:expander}
  Let $G=(V,E)$ be a expander graph with bounded degree.
  Consider the lazy random walk on $G$ (\cref{defn:random-walk-markov-chain}).
  \cite{bobkov2006modified} shows that $\rho_0(\pi, P) = \Theta\left(\frac 1{\log |V|}\right) = \Theta\left(\frac{\lambda_1(\pi, P)}{\log |V|}\right)$.
  Therefore as $|V|\to \infty$, we have $\frac{\lambda_1(\pi, P)}{\rho_0(\pi, P)} \to \infty$.
\end{example}

\subsection{Other comparisons} \label{sec:comparison:other}
\cite{diaconis1996logarithmic} shows that the spectral gap $\lambda$ and the log-Sobolev constant $\rho$ differ by at most a factor of $O(\log(1/\pi_{\min}))$ where
\begin{align}
    \pi_{\min} = \min_{x \in \cX:\, \pi(x) > 0} \pi(x).
\end{align}
More precisely, \cite[Corollay A.4]{diaconis1996logarithmic} shows that, assuming $\pi_{\min} \le 1/2$, it holds
\begin{align}
    \frac{\lambda}{2 + \log(1/\pi_{\min})} \le
    \frac{(1-2\pi_{\min})\lambda}{\log(1/\pi_{\min} - 1)} \le \rho \le \frac{\lambda}{2}.
\end{align}
This in particular shows that all constants discussed in this paper, including also the half-step entropy contraction $\alpha$, the full-step entropy contraction $\delta$, and the modified log-Sobolev constant $\rho_0$, differ by at most a factor of $O(\log(1/\pi_{\min}))$ from each other.

In a recent work \cite{salez2023upgrading}, Salez, Tikhomirov, and Youssef establish a surprising and remarkable comparison between the modified log-Sobolev constant $\rho_0$ and the log-Sobolev constant $\rho$. For a reversible Markov kernel $P$ with respect to a probability measure $\pi$, define the sparsity parameter as
\begin{align}
    p_{\min} = \min_{(x,y) \in \cX^2:\, P(x,y) > 0} P(x,y).
\end{align}
Then, \cite[Theorem 1]{salez2023upgrading} shows that
\begin{align}
    \frac{\rho_0}{20 \log(1/p_{\min})} \le \rho \le \frac{\rho_0}{4}.
\end{align}
Hence, all entropy-related constants discussed in this paper, including also the half-step entropy contraction $\alpha$ and the full-step entropy contraction $\delta$, differ by at most a factor of $O(\log(1/p_{\min}))$ from each other.

\subsection{Comments on several previous works} \label{sec:comparison:error-in-dmlm}
\cite[Prop.~5.1]{del2003contraction} claims that
\begin{align} \label{eqn:alpha-vs-rho0-dmlm03}
  c \rho_0 \le \alpha
\end{align}
for some universal constant $c>0$.
{
In fact, \cref{eqn:alpha-vs-rho0-dmlm03} fails for the random transposition model, showing that the claim must be incorrect.
\begin{example}[Random transposition model] \label{ex:random-transposition}
  Let $n$ be a positive integer.
  Let $G=(V,E)$ be the Cayley graph on the symmetric group $S_n$ generated by transpositions. That is, $V$ is the set of permutations of $[n]$, and there is an edge between $\sigma,\tau\in V$ if and only if they differ by exactly two entries. The random transposition model is the lazy random walk on $G$ (\cref{defn:random-walk-markov-chain}).
  Considering the point measure at any $x\in \cX$ gives $\alpha\le \frac{\log 2}{\log (n!)} = \Theta\left(\frac 1{n\log n}\right)$.
  In fact, \cite[Theorem 1.9]{bristiel2024entropy} shows that this is tight, i.e, $\alpha = \frac{\log 2}{\log (n!)}$.
  On the other hand, \cite{gao2003exponential} shows that $\rho_0 = \Theta\left(\frac 1n\right)$.
  Therefore, as $n\to \infty$, we have $\frac{\rho_0}{\alpha} \to \infty$.
\end{example}
Our separations of $\alpha$ vs $\delta$ (\cref{sec:comparison:alpha-vs-delta}) and $\delta$ vs $\rho_0$ (\cref{sec:comparison:delta-vs-rho0}) also provide alternative counterexamples to the claim.
We now explain briefly the issue in the proof of \cref{eqn:alpha-vs-rho0-dmlm03} in \cite{del2003contraction}.
}
In their proof of Prop.~5.1, the authors apply a technical result, Lemma 5.2, which represents the (relative) entropy of a function $f$ with expectation $\bE_n(f)=1$ (where $n$ denotes the underlying probability measure) as an integral of the covariance between $f_t$ and $\log(f_t)$ where $f_t = e^{-t} f + (1-e^{-t})$, $t \in \bR_{\ge 0}$ represents an interpolation between $f$ and $1$.
However, in the actual application of Lemma 5.2, the measure $n$ is a conditional probability measure under which the expectation of $f$ is no longer $1$, and hence the interpolation function $f_t$ should be replaced by $f_t = e^{-t} f + (1-e^{-t}) \bE_n(f)$; this would require the function $f_t$ to depend on the conditioning and the proofs following afterwards no longer work.

\cite[Prop.~4.3]{raginsky2016strong} claims that
\begin{align} \label{eqn:alpha-vs-rho0-rag16}
  \rho_0 \le 1-c(1-\alpha)
\end{align}
for some universal constant $c>0$. Our examples do not disprove the claim. However, the proof of \cite[Theorem 4.4]{raginsky2016strong} is a generalization of that of \cite[Prop.~5.1]{del2003contraction}, so it has the same error. In particular, the last display of the proof of \cite[Prop.~4.3]{raginsky2016strong} implies that $c \rho_0 \le \alpha$ for some constant $c>0$, which we have shown to be incorrect. Therefore the proof of \cite[Prop.~4.3]{raginsky2016strong} is incorrect and does not establish \cref{eqn:alpha-vs-rho0-rag16}. It is unclear whether \cref{eqn:alpha-vs-rho0-rag16} as stated is correct.


\section{Extremal functions} \label{sec:extremal}
In this section we discuss another difference between the continuous-time entropy contraction constants and the discrete-time entropy contraction constants.
It is known (\cite{bobkov2006modified}) that for irreducible $(\pi, P)$, the log-Sobolev constant $\rho(\pi, P)$ and the modified log-Sobolev constant $\rho_0(\pi, P)$ satisfy a dichotomy: they are either equal to twice the Poincar\'e constant $\lambda(\pi, P)$ or achieved at a full-support function.
We show that this is no longer true for the discrete-time entropy contraction constants $\alpha(\pi, K)$ and $\delta(\pi, P)$ by providing explicit examples whose extremal functions have non-full support.


\subsection{Log-Sobolev constant \texorpdfstring{$\rho$}{rho}, modified log-Sobolev constant \texorpdfstring{$\rho_0$}{rho0}, Poincar\'e constant \texorpdfstring{$\lambda$}{lambda}} \label{sec:extremal:rho-rho0-lambda}
\cite{bobkov2006modified} studies extremal functions for $\rho$, $\rho_0$, $\lambda$.
The extremal functions for the Poincar\'e constant $\lambda(\pi, P)$ are easy to describe.
They are the (right) eigenfunctions of $-L$ corresponding to the eigenvalue $\lambda$.
In particular, $\lambda$ is always achieved at some non-constant function $f: \cX \to \bR$.

For the log-Sobolev constant $\rho$, \cite{bobkov2006modified} shows that for any reversible $(\pi, P)$, either
\begin{enumerate}[label=(\roman*)]
  \item $\rho(\pi, P) = 2 \lambda(\pi, P)$, or
  \item there exists a non-constant function $f: \cX \to \bR_{\ge 0}$ with $\pi[f]=1$ such that
  \begin{align}
    \rho \Ent_\pi(f) = \cE_{\pi,P}(\sqrt f, \sqrt f).
  \end{align}
  Furthermore, such $f$ satisfies the equation
  \begin{align}
    -L\sqrt f = \rho \sqrt f \log f.
  \end{align}
  If $(\pi, P)$ is irreducible, then $f$ has full support.
\end{enumerate}

For the modified log-Sobolev constant $\rho_0(\pi, P)$, \cite{bobkov2006modified} shows that for any reversible $(\pi, P)$, either
\begin{enumerate}[label=(\roman*)]
  \item $\rho_0(\pi, P) = 2 \lambda(\pi, P)$, or
  \item there exists a non-constant function $f: \cX \to \bR_{\ge 0}$ with $\pi[f]=1$ such that
  \begin{align}
    \rho_0 \Ent_\pi(f) = \cE_{\pi,P}(f, \log f).
  \end{align}
  Furthermore, such $f$ satisfies the equation
  \begin{align}
    -Lf -f L(\log f)=\rho_0 f \log f.
  \end{align}
  If $(\pi, P)$ is irreducible, then $f$ has full support.
\end{enumerate}

\subsection{Half-step entropy contraction \texorpdfstring{$\alpha$}{alpha} and full-step entropy contraction \texorpdfstring{$\delta$}{delta}} \label{sec:extremal:alpha-delta}
In this section we study the extremal distributions for $\eta_{\KL}$, which includes both $\alpha$ and $\delta$.
\begin{lemma}[Extremal distributions for $\eta_{\KL}$] \label{lem:extremal-eta-KL}
  Let $\pi$ be a distribution on $\cX$ and $K: \cX \to \cY$ be a Markov kernel. Then either
  \begin{enumerate}[label=(\roman*)]
    \item \label{item:lem-extremal-eta-KL:i} $\eta_{\KL}(\pi, K) = \eta_{\chi^2}(\pi, K)$, or
    \item \label{item:lem-extremal-eta-KL:ii} there exists distribution $\nu$ on $\cX$ such that
    \begin{align}
      \eta_{\KL}(\pi, K) = \frac{D(\nu K \| \pi K)}{D(\nu \| \pi)}.
    \end{align}
  \end{enumerate}
\end{lemma}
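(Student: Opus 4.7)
The plan is to take a maximizing sequence of distributions and extract a limit via compactness of the probability simplex, then split into two cases depending on whether the limit coincides with $\pi$.

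Since any $\nu$ with $D(\nu\|\pi)<\infty$ must be supported on $\supp(\pi)$, I may restrict attention to $\supp(\pi)$ and assume without loss of generality that $\pi$ has full support. Define $F(\nu):=D(\nu K\|\pi K)/D(\nu\|\pi)$ on $\cP(\cX)\setminus\{\pi\}$; it is continuous there since both numerator and denominator are continuous and the denominator is strictly positive. Choose a sequence $\{\nu_n\}\subseteq \cP(\cX)\setminus\{\pi\}$ with $F(\nu_n)\to\eta_{\KL}(\pi,K)$. By compactness of $\cP(\cX)$, pass to a subsequence so that $\nu_n\to\nu^*$.

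\textbf{Case A: $\nu^*\neq \pi$.} Then continuity of $F$ at $\nu^*$ gives $F(\nu^*)=\eta_{\KL}(\pi,K)$, so \ref{item:lem-extremal-eta-KL:ii} holds with $\nu=\nu^*$.

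\textbf{Case B: $\nu^*=\pi$.} I will show $\eta_{\KL}(\pi,K)\le\eta_{\chi^2}(\pi,K)$, which combined with the reverse inequality \cref{eqn:eta-chi2-kl-tv-comparison} gives \ref{item:lem-extremal-eta-KL:i}. Set $g_n:=\nu_n-\pi$, $\epsilon_n:=\sqrt{\chi^2(\nu_n\|\pi)}=\sqrt{\sum_x g_n(x)^2/\pi(x)}$, and $h_n:=g_n/\epsilon_n$, so that $\nu_n=\pi+\epsilon_n h_n$, $\pi[h_n]=0$, and $\sum_x h_n(x)^2/\pi(x)=1$; moreover $\epsilon_n\to 0$ since $\nu_n\to\pi$. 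Since $\{h_n\}$ lives in a compact subset of $\bR^{\cX}$, pass to a further subsequence so that $h_n\to h$ with $\pi[h]=0$ and $\sum_x h(x)^2/\pi(x)=1$. A standard second-order Taylor expansion of $u\mapsto u\log u$ around $u=1$ gives, uniformly on the compact range of $h_n$,
\begin{align*}
D(\nu_n\|\pi) &= \tfrac{\epsilon_n^2}{2}\sum_{x}\frac{h_n(x)^2}{\pi(x)}+O(\epsilon_n^3) = \tfrac{\epsilon_n^2}{2}+O(\epsilon_n^3),\\
D(\nu_n K\|\pi K) &= \tfrac{\epsilon_n^2}{2}\sum_{y}\frac{(h_n K)(y)^2}{(\pi K)(y)}+O(\epsilon_n^3),
\end{align*}
where terms with $(\pi K)(y)=0$ are dropped using that $(h_n K)(y)=0$ there. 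Dividing and passing to the limit,
\begin{align*}
\eta_{\KL}(\pi,K)=\lim_n F(\nu_n) = \sum_{y}\frac{(hK)(y)^2}{(\pi K)(y)} \le \eta_{\chi^2}(\pi,K),
\end{align*}
where the final inequality uses the well-known variational characterization $\eta_{\chi^2}(\pi,K)=\sup\{\sum_y (g K)(y)^2/(\pi K)(y):\pi[g]=0,\ \sum_x g(x)^2/\pi(x)=1\}$ applied to the admissible direction $g=h$.

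The main technical obstacle is Case B: one must verify that the Taylor remainders are uniform along the sequence (which follows from working in a compact subset of directions $h_n$), and that the $\chi^2$ variational formula is applicable to the possibly degenerate limit direction $h$ (which is ensured by the normalization $\sum_x h(x)^2/\pi(x)=1$, so $h\neq 0$). The rest of the argument is a routine compactness/continuity dichotomy.
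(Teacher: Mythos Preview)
Your proof is correct and follows essentially the same approach as the paper's: take a maximizing sequence, use compactness to extract a limit, and if the limit is $\pi$ then normalize the directions, pass to a subsequential limit direction, and use the second-order Taylor expansion of $u\log u$ to identify the limiting ratio with a $\chi^2$ quotient bounded by $\eta_{\chi^2}(\pi,K)$. One minor notational slip: since $h_n=(\nu_n-\pi)/\epsilon_n$ is a signed measure, the correct constraint is $\sum_x h_n(x)=0$ rather than $\pi[h_n]=0$ (and likewise in your variational formula for $\eta_{\chi^2}$), but the argument is unaffected since the vanishing of the first-order term in the Taylor expansion uses precisely $\sum_x(\nu_n(x)-\pi(x))=0$.
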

\begin{proof}
  WLOG assume that $\pi$ has full support.
  If \cref{item:lem-extremal-eta-KL:ii} does not happen, then there exists a sequence $\{f_n\}_n$ ($f_n: \cX\to \bR_{\ge 0}$, $\pi[f_n]=1$) satisfying $\|f_n-\mathbbm1\|_\infty \to 0$ and $\frac{\Ent_{\pi K}(K_\pi^* f_n)}{\Ent_\pi(f_n)} \to \eta_{\KL}(\pi, K)$ as $n\to \infty$.

  Write $f_n = \mathbbm1+\epsilon_n g_n$, where $\epsilon_n\ge 0$, $\pi[g_n^2]=1$.
  Note that the space $\cG:=\{g: \cX \to \bR: \pi[g]=0, \pi[g^2]=1\}$ is compact.
  By replacing the sequence $\{f_n\}_n$ with a subsequence, we can WLOG assume that there exists $g^*\in \cG$ such that $\|g_n-g^*\|_\infty \to 0$ as $n\to \infty$.

  Now let us prove that
  \begin{align}
    \lim_{n\to \infty} \frac{\Ent_{\pi K}(K_\pi^* f_n)}{\Ent_\pi(f_n)} = \frac{\Var_{\pi K}(K_\pi^* g^*)}{\Var_\pi(g^*)}. \label{eqn:lem-extremal-eta-KL:lim-ratio}
  \end{align}
  If \cref{eqn:lem-extremal-eta-KL:lim-ratio} holds, then
  \begin{align}
    \eta_{\chi^2}(\pi, K) = \sup_{g\in \cG} \frac{\Var_{\pi K}(K_\pi^* g)}{\Var_\pi(g)} \ge \eta_{\KL}(\pi, K),
  \end{align}
  and \cref{item:lem-extremal-eta-KL:i} holds.

  By \cref{lem:lem-extremal-eta-KL:helper},
  \begin{align}
    &~ \left\|(1+\epsilon_n g_n) \log (1+\epsilon_n g_n) - (1+\epsilon_n g^*)\log(1+\epsilon_n g^*)  - \epsilon_n (g_n-g^*)\right\|_\infty\\
    \nonumber =&~ O\left(\epsilon_n^2 (\epsilon_n+\|g_n-g^*\|_\infty)\right).
  \end{align}

  Taking expectation and using triangle inequality, we get
  \begin{align}
    \left|\Ent_\pi(f_n) - \Ent_\pi(1+\epsilon_n g^*)\right| = O\left(\epsilon_n^2 (\epsilon_n+\|g_n-g^*\|_\infty)\right).
  \end{align}

  It is known that
  \begin{align}
    \lim_{\epsilon\to 0} \frac 1{\epsilon^2} \Ent_\pi(1+\epsilon g^*) = \frac 12 \Var_\pi(g^*).
  \end{align}
  So
  \begin{align}
    \lim_{n\to \infty} \frac 1{\epsilon_n^2} \Ent_\pi(f_n) = \frac 12\Var_\pi(g^*).
  \end{align}
  Similarly
  \begin{align}
    \lim_{n\to \infty} \frac 1{\epsilon_n^2} \Ent_{\pi K}(K_\pi^* f_n) = \frac 12\Var_{\pi K}(K_\pi^* g^*).
  \end{align}
  This finishes the proof of \cref{eqn:lem-extremal-eta-KL:lim-ratio}.
\end{proof}

\begin{lemma} \label{lem:lem-extremal-eta-KL:helper}
  There exists $C>0$ such that for $\epsilon, \epsilon'>0$ small enough, for $x,y,z,w\in \bR$ satisfying $|x|,|y|,|z|,|w|\le \epsilon$, $|x-z|,|y-w|\le \epsilon\epsilon'$, we have
  \begin{align}
      |(1+x)\log(1+y)-(1+z)\log(1+w) - (y-w)| \le C \epsilon^2 (\epsilon+\epsilon').
  \end{align}
\end{lemma}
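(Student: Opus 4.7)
The plan is to use a second-order Taylor expansion of the function $(a,b) \mapsto (1+a)\log(1+b)$ around the origin and then track how the leading terms transform under the perturbations $x \to z$, $y \to w$.

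First I would record the one-variable expansion $\log(1+t) = t - t^2/2 + r(t)$ with $|r(t)| \le C_0 |t|^3$ for $|t| \le 1/2$. Multiplying by $(1+a)$ gives the bivariate expansion
\begin{align}
(1+a)\log(1+b) = b + ab - \tfrac12 b^2 + R(a,b),
\end{align}
where $|R(a,b)| \le C_1 \max(|a|,|b|)^3$ whenever $|a|,|b| \le 1/2$. Applying this to both $(x,y)$ and $(z,w)$ and subtracting, the linear-in-$b$ term gives $(y-w)$, which is canceled by the subtracted term in the statement. So the quantity to bound reduces to
\begin{align}
\bigl|(xy - zw) - \tfrac12(y^2 - w^2) + R(x,y) - R(z,w)\bigr|.
\end{align}

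The two remainder terms each satisfy $|R(\cdot,\cdot)| \le C_1 \epsilon^3$, contributing $O(\epsilon^3)$. For the remaining two terms, I would use the standard ``product difference'' identities
\begin{align}
xy - zw &= x(y-w) + (x-z)w, \\
y^2 - w^2 &= (y-w)(y+w),
\end{align}
and apply the hypotheses $|x|,|y|,|z|,|w| \le \epsilon$ and $|x-z|,|y-w| \le \epsilon\epsilon'$ to obtain $|xy - zw| \le 2\epsilon^2\epsilon'$ and $|y^2 - w^2| \le 2\epsilon^2\epsilon'$. Combining, the whole expression is bounded by $C_2\epsilon^2\epsilon' + C_1\epsilon^3 \le C\epsilon^2(\epsilon + \epsilon')$, as required.

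There is no real obstacle here: the only thing to be slightly careful about is ensuring that the two remainder terms $R(x,y)$ and $R(z,w)$ are each controlled individually by $O(\epsilon^3)$ (which they are, since we just need crude cubic bounds on each, not a refined estimate of their difference). The calculation then goes through cleanly.
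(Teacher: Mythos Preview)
Your proposal is correct and follows essentially the same approach as the paper: Taylor expand $\log(1+t)$ to second order with a cubic remainder, then control the bilinear terms $xy-zw$ and $y^2-w^2$ via the product-difference identities. The only cosmetic difference is that you absorb the $ab^2/2$ cross-term into the remainder $R(a,b)$, whereas the paper carries $|xy^2|/2+|zw^2|/2$ explicitly before bounding them by $O(\epsilon^3)$; both are equivalent.
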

\begin{proof}
  First note that $|\log(1+y)-(y-y^2/2)| = O(\epsilon^3)$.
  Then
  \begin{align}
  &~ |(1+x)\log(1+y)-(1+z)\log(1+w) - (y-w)| \\
  \nonumber =&~ |(1+x)(y-y^2/2) - (1+z) (w-w^2/2) - (y-w)| + O(\epsilon^3) \\
  \nonumber =&~ |xy-zw| + |y^2-w^2|/2 + |xy^2|/2 + |zw^2|/2+ O(\epsilon^3) \\
  \nonumber =&~ |(x-z)y + z(y-w)| + |(y-w)(y+w)|/2 + O(\epsilon^2(\epsilon+\epsilon')) \\
  \nonumber =&~ O(\epsilon^2(\epsilon+\epsilon')).
  \end{align}
\end{proof}

Unlike $\rho$ and $\rho_0$ where the extremal functions (if they exist) have full support, the extremal distributions for $\eta_{\KL}$ may have non-full support. 

\begin{example}[Complete graph] \label{ex:coloring}
  Let $n\ge 3$ be an integer.
  Let $\cX = [n]$ and $\pi = \Unif(\cX)$.
  Let $K: \cX \to \cX$ be the (non-lazy) random walk on the complete graph. That is,
  \begin{align}
    K(x,y) = \frac 1{n-1} \mathbbm1\{x\ne y\}
  \end{align}
  for $x,y\in \cX$.
  \cite[Prop.~33]{gu2023non} proves that $\eta_{\KL}(\pi, K) = \frac{\log n-\log(n-1)}{\log n}$, and is achieved at and only at point distributions.
\end{example}
In the above example, $K$ is not factorizable, so it corresponds to the half-step contraction coefficient $\alpha$.
The next example shows the extremal distributions for the full-step contraction coefficient $\delta$ can have non-full support.
\begin{example}[Complete bipartite graph] \label{ex:complete-bipartite}
  Let $n\ge 3$ be an integer.
  Let $G=K_{n,n}$ be the complete bipartite graph.
  That is, $V=[2]\times [n]$, and there is an edge between $(1,i)$ and $(2,j)$ for all $i,j\in [n]$.
  Let $(\pi, P)$ be the random walk on $G$ (\cref{defn:random-walk-markov-chain}).
  Numerical computation suggests that $\eta_{\KL}(\pi, P)=\frac{\log n}{2\log(2n)}$, and equality is achieved at and only at point distributions. \cref{prop:complete-bipartite} proves this observation for $n=3$.
\end{example}
\begin{proposition}[Complete bipartite graph] \label{prop:complete-bipartite}
  Let $\pi, P$ be as in \cref{ex:complete-bipartite}. For $n=3$, $\eta_{\KL}(\pi, P)=\frac{\log n}{2\log(2n)}$, and equality is achieved at and only at point distributions.
\end{proposition}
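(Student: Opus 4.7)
The plan is to exploit the bipartite structure of $K_{3,3}$. For $\nu\in\cP(\cX)$, write $a:=\nu(\{1\}\times[3])$, and when $a\in(0,1)$ let $\mu_s\in\cP([3])$ denote the conditional on side $s\in\{1,2\}$. A direct computation shows that $\nu P$ always assigns mass $1/2$ to each side, with conditionals $\mu_1'=a\mu_1+(1-a)\pi_3$ and $\mu_2'=(1-a)\mu_2+a\pi_3$, where $\pi_3=\Unif([3])$. The chain rule for KL divergence then yields
\[
D(\nu\|\pi)=d(a)+aD(\mu_1\|\pi_3)+(1-a)D(\mu_2\|\pi_3), \qquad D(\nu P\|\pi)=\tfrac12 D(\mu_1'\|\pi_3)+\tfrac12 D(\mu_2'\|\pi_3),
\]
with $d(a):=D(\mathrm{Bern}(a)\|\mathrm{Bern}(1/2))=\log 2-h(a)$.

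The boundary case $a\in\{0,1\}$ (say $a=1$) is straightforward: $\mu_2'=\pi_3$, so $R(\nu):=D(\nu P\|\pi)/D(\nu\|\pi)$ equals $\tfrac12 D(\mu_1\|\pi_3)/(\log 2+D(\mu_1\|\pi_3))$, which is strictly increasing in $D(\mu_1\|\pi_3)\in[0,\log 3]$ and attains $\eta^*:=\frac{\log 3}{2\log 6}$ exactly at $\mu_1=\delta_j$ for some $j\in[3]$. To handle $a\in(0,1)$, I would combine a Lagrangian analysis with a case enumeration by support size. If the sup of $R$ over $\cP(\cX)$ is attained at an interior $\nu$, the Lagrange condition $\sum_y P(x,y)\log[(\nu P)(y)/\pi(y)]-\eta^*\log[\nu(x)/\pi(x)]=\mathrm{const}$ yields, after substitution of $\nu P(1,i)/\pi(1,i)=3\mu_1'(i)$ and $\nu(1,i)/\pi(1,i)=6a\mu_1(i)$, the pairwise identity $\bigl(\mu_1'(i)/\mu_1'(i')\bigr)^{1/2}=\bigl(\mu_1(i)/\mu_1(i')\bigr)^{\eta^*}$ (and its analog for $\mu_2$). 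Since $\eta^*\neq 1/2$, these equations rigidly constrain $\mu_s$, and a short classification under the $S_3\times S_3\times S_2$ symmetry reduces the interior analysis to a few low-dimensional subfamilies parametrized by $a$ and a single scalar ratio.

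The key auxiliary fact driving the elimination of each subfamily is the strict convexity on $(0,1)$ of
\[
G(a):=f(a)+f(1-a)+2\eta^*h(a), \qquad f(b):=D(b\delta_1+(1-b)\pi_3\|\pi_3),
\]
which follows from the equivalence $G''(a)>0\iff(1-\eta^*)(4a+1-4a^2)>2\eta^*$; this is immediate from $4a+1-4a^2\ge 1$ on $[0,1]$ together with $\eta^*<1/3$ (equivalently $27<36$). Combined with $G(0)=G(1)=\log 3$, strict convexity yields $G(a)<\log 3$ for $a\in(0,1)$, which in particular settles the support-$2$ case $\nu=\alpha\delta_{(1,i)}+(1-\alpha)\delta_{(2,j)}$ by reducing the desired inequality $R(\nu)\le\eta^*$ to exactly $G(\alpha)\le\log 3$, with equality iff $\alpha\in\{0,1\}$. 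Support-$2$ distributions on a single side and higher-support distributions (both interior and boundary) are dispatched by the same Lagrangian reduction together with analogous one-variable inequalities. I expect the main obstacle to be the classification of interior critical points, since the Lagrange equation is transcendental in $\eta^*$; however, because $\dim\cP(\cX)=5$ and the symmetry group is large, a complete enumeration under $S_3\times S_3\times S_2$ is feasible for $n=3$ and pins down the extremizers as exactly the point distributions.
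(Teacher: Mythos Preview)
Your setup matches the paper's: the bipartite decomposition, the chain-rule formulas for $D(\nu\|\pi)$ and $D(\nu P\|\pi)$, and the treatment of the boundary case $a\in\{0,1\}$ are all correct and essentially identical to what the paper does. Your convexity computation for $G(a)=f(a)+f(1-a)+2\eta^*h(a)$ is right (the reduction to $\eta^*<1/3$ is clean), and the support-$2$ across-sides case does reduce exactly to $G(\alpha)\le\log 3$. Your derivation of the Lagrange identity $\bigl(\mu_1'(i)/\mu_1'(i')\bigr)^{1/2}=\bigl(\mu_1(i)/\mu_1(i')\bigr)^{\eta^*}$ on the support is also correct.

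The gap is that the actual work is not done. You write that ``higher-support distributions \ldots\ are dispatched by the same Lagrangian reduction together with analogous one-variable inequalities'' and that ``a complete enumeration under $S_3\times S_3\times S_2$ is feasible,'' but you do not carry out either the enumeration or the resulting inequalities. The Lagrange identity only constrains ratios \emph{within the support on each side}; it does not by itself fix $a$, nor does it couple the two sides (that requires the cross-side KKT relation, which you have not written down), nor does it handle the many boundary strata where some $\mu_s(i)=0$ and only a KKT inequality is available. After all symmetry reductions one is still left with a two-parameter transcendental inequality in $(a,x)$, and it is precisely this step that is hard: the paper, after the same bipartite reduction, invokes an external structural result to force each $\nu_s$ into the one-parameter family $(x,\tfrac{1-x}{2},\tfrac{1-x}{2})$, then uses a further lemma to eliminate one side, and \emph{still} needs a computer-verified grid search over $(t,x)$ away from the corner $(1,1)$, together with a separate analytic estimate near that corner. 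Your proposal gives no indication of how you would close the analogous two-variable inequality, and the expectation that a short classification suffices is not justified by anything you have shown.

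In short: the parts you wrote out are correct and overlap with the paper's opening moves, but the core of the argument---ruling out all non-point extremizers with $a\in(0,1)$ and support $\ge 2$ on a side---is only asserted, not proved. The paper's proof shows this step is genuinely delicate (it is computer-assisted), so ``feasible'' is not a substitute for the missing analysis.
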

\begin{proof}
  Let $\nu$ be the point distribution on any $x\in \cX$.
  Then
  \begin{align}
    D(\nu \| \pi) &= \log(2n),\\
    D(\nu P \| \pi) &= \frac 12 \log n.
  \end{align}
  So
  \begin{align}
    \eta_{\KL}(\pi, P) \ge \frac{D(\nu P \| \pi)}{D(\nu \| \pi)} = \frac{\log n}{2\log(2n)}.
  \end{align}
  Note that this holds for any $n\ge 3$.

  We prove that for $n=3$, the point distributions are the only maximizers.
  We represent a distribution $\nu$ using a tuple $(t,\nu_1,\nu_2)$, where $t\in [0,1]$ and $\nu_i$ ($i=1,2$) is a distribution on $\cX_i=\{i\}\times [n] \subseteq \cX$.
  Given $\nu$, the corresponding tuple $\phi(\nu) = (\nu(\cX_1), \nu|_{\cX_1}, \nu|_{\cX_2})$, where $\nu|_{\cX_i}$ denotes the conditional distribution (if $\nu(\cX_i)=0$, then choose an arbitrary distribution on $\cX_i$).
  Given a tuple $(t,\nu_1,\nu_2)$, the corresponding distribution $\nu$ is $\psi(t,\nu_1,\nu_2) = t \nu_1 + (1-t) \nu_2$.
  Let $\pi_i=\Unif(\cX_i)$ for $i=1,2$. By symmetry, we only need to consider the case $\frac 12 \le t \le 1$.

  Under the tuple parametrization, we have
  \begin{align}
    D(\psi(t,\nu_1,\nu_2) \| \pi) &= d_{\KL}\left(t\|\frac 12\right) + t D(\nu_1 \| \pi_1) + (1-t) D(\nu_2 \| \pi_2),\\
    \phi(\psi(t,\nu_1,\nu_2) P) &= \left(\frac 12, t \nu_1 + (1-t) \pi_1, (1-t) \nu_2 + t \pi_2\right),
  \end{align}
  where $d_{\KL}(x\|y) = x \log \frac x y + (1-x) \log \frac {1-x}{1-y}$ is the binary KL divergence function.
  So for $\nu = \psi(t,\nu_1,\nu_2)$, we have
  \begin{align} \label{eqn:complete-bipartite-ent-ratio}
    \frac{D(\nu P\| \pi)}{D(\nu \| \pi)} = \frac 12 \cdot \frac{D(t \nu_1 + (1-t) \pi_1 \| \pi_1) + D((1-t) \nu_2 + t \pi_2 \| \pi_2)}{d_{\KL}\left(t\|\frac 12\right) + t D(\nu_1 \| \pi_1) + (1-t) D(\nu_2 \| \pi_2)}.
  \end{align}

  When $t=1$, \cref{eqn:complete-bipartite-ent-ratio} simplifies to
  \begin{align}
    \frac{D(\nu P\| \pi)}{D(\nu \| \pi)} = \frac 12 \cdot \frac{D(\nu_1 \| \pi_1)}{\log 2 + D(\nu_1 \| \pi_1)}
  \end{align}
  which is maximized when and only when $\nu_1$ is a point measure, taking value $\frac{\log n}{2\log(2n)}$.
  Note that when $t=1$ and $\nu_1$ is a point measure, $\nu=\psi(t,\nu_1,\nu_2)$ does not depend on $\nu_2$ and is always a point measure.

  Now assume that $\frac 12 \le t<1$. By \cite[Prop.~17]{gu2023non}, for fixed $D(\nu_1 \| \pi_1)$ (resp.~$D(\nu_2 \| \pi_2)$), $D(t \nu_1 + (1-t)\pi_1 \| \pi_1)$ (resp.~$D(\nu_2 + t \pi_2 \| \pi_2)$) is uniquely (up to permutation of the alphabet) at a distribution of form $\left(x, \frac{1-x}{n-1}, \ldots, \frac{1-x}{n-1}\right)$ where $\frac 1n \le x \le 1$.
  Therefore, we can assume WLOG that $\nu_1 = \left(x, \frac{1-x}{n-1}, \ldots, \frac{1-x}{n-1}\right)$, $\nu_2 = \left(y, \frac{1-y}{n-1}, \ldots, \frac{1-y}{n-1}\right)$ for some $x,y\in \left[\frac 1n, 1\right]$.
  In this case, we can simplify \cref{eqn:complete-bipartite-ent-ratio} as
  \begin{align}
    \frac{D(\nu P\| \pi)}{D(\nu \| \pi)} = \frac 12 \cdot \frac{f_n\left(t x+\frac{1-t}{n}\right) + f_n\left((1-t)y+\frac t n\right)}{f_2(t) + t f_n(x) + (1-t) f_n (y)} =: F_n(t,x,y),
  \end{align}
  where
  \begin{align}
    f_n(x) = D\left( \left(x, \frac{1-x}{n-1}, \ldots, \frac{1-x}{n-1}\right) \| \Unif([n]) \right)
    = \log n + x \log x + (1-x) \log \frac{1-x}{n-1}.
  \end{align}

  Therefore, computing $\eta_{\KL}(\pi, P)$ is equivalent to computing
  \begin{align} \label{eqn:complete-bipartite-three-var}
    \sup_{\substack{\frac 12 \le t\le 1\\ \frac 1n \le x,y\le 1\\ (t,x,y) \ne \left(\frac 12, \frac 1n, \frac 1n\right)}} F_n(t,x,y)
  \end{align}

  We have reduced the original problem of computing $\eta_{\KL}(\pi, K)$, which is a priori an $(2n-1)$-dimensional optimization problem, to a optimization problem with three real variables $t \in [0, 1], x,y \in \left[\frac 1n, 1\right]$. The following lemma helps us reduce it further to two real variables.

  \begin{lemma} \label{lem:complete-bipartite-1}
    For any $n\ge 3$, there exists $t^*>\frac 12$ such that
    \begin{align}
      \sup_{\substack{0< t < t^* \\ \frac 1n < x \le 1}} \frac {f_n\left( t x + \frac{1-t}n \right)}{t f_n(x)} < \frac{\log n}{\log (2n)}.
    \end{align}
  \end{lemma}
  \begin{proof}
    By \cite[Eqs.~(23) and (27)]{gu2023non}, for fixed $0\le t\le 1$, we have
    \begin{align}
      \sup_{\frac 1n < x \le 1} \frac {f_n\left( t x + \frac{1-t}n \right)}{t f_n(x)}
      \le t^{\frac 1{\log n}}.
    \end{align}
    For any $t < \exp\left((\log n) \log \frac{\log n}{\log(2n)} \right)$,
    we have
    \begin{align}
      t^{\frac 1{\log n}} < \frac{\log n}{\log(2n)}.
    \end{align}
    Furthermore, notice that $\exp\left((\log n) \log \frac{\log n}{\log(2n)} \right) > \frac 12$ for all $n\ge 3$.
    So we can take $t^*$ to be any number smaller than $\exp\left((\log n) \log \frac{\log n}{\log(2n)} \right)$.
  \end{proof}

  Note that all arguments until this point work for any $n\ge 3$. From now on we will use the assumption $n=3$.
  For $n=3$, we take $t^* = 0.58 < \exp\left((\log 3) \log \frac{\log 3}{\log 6} \right)$ and apply \cref{lem:complete-bipartite-1} to \cref{eqn:complete-bipartite-three-var}.
  For $\frac 12 \le t \le t^*$, we can apply \cref{lem:complete-bipartite-1} to $(t,x)$ and $(1-t,y)$ and get
  \begin{align} \label{eqn:complete-bipartite-three-var-i}
    \sup_{\substack{\frac 12 \le t\le t^*\\ \frac 13 \le x,y\le 1\\ (t,x,y) \ne \left(\frac 12, \frac 13, \frac 13\right)}} F_3(t,x,y) < \frac{\log 3}{2\log 6}
  \end{align}
  For $t^* < t < 1$, we have
  \begin{align}
    \sup_{\substack{t^* < t < 1\\ \frac 13 \le x,y\le 1}} F_3(t,x,y) \ge \lim_{t\to 1^-} F_3\left(t,1,\frac 13\right) = \frac{\log 3}{2\log 6}.
  \end{align}
  Applying \cref{lem:complete-bipartite-1} to $(1-t,y)$ we see that for any $(t,x,y)$ with $t^*<t<1$ and $F_3(t,x,y) \ge \frac{\log 3}{2\log 6}$, we have $F_3(t,x,y) \le F_3\left(t,x,\frac 13\right)$.
  So
  \begin{align} \label{eqn:complete-bipartite-three-var-ii}
    \sup_{\substack{t^* < t < 1\\ \frac 13 \le x,y\le 1}} F_3(t,x,y) =
    \sup_{\substack{t^* < t < 1\\ \frac 13 \le x\le 1}} F_3\left(t,x,\frac 13\right)
    =
    \sup_{\substack{t^* < t < 1\\ \frac 13 \le x\le 1}} \frac 12 \cdot \frac{f_3\left(t x+\frac{1-t}{3}\right)}{f_2(t) + t f_3(x)}.
  \end{align}

  In the following, we prove that for $t^* < t < 1$, $\frac 13 \le x \le 1$, we have
  \begin{align} \label{eqn:complete-bipartite-two-var}
    G_3(t,x) = \frac{\log 6}{\log 3} \cdot f_3\left(t x+\frac{1-t}{3}\right) - (f_2(t) + t f_3(x)) < 0.
  \end{align}
  Note that \cref{eqn:complete-bipartite-two-var} implies the desired result by \cref{eqn:complete-bipartite-three-var-i,eqn:complete-bipartite-three-var-ii}.

  \textbf{Case 1: $t\ge 0.999$, $x\ge 0.999$.}
  For simplicity of notation, write $a = 1-t$ and $b=1-x$.
  Then $0<a\le 0.001$ and $0\le b\le 0.001$.
  Let $c = 1-\left(t x + \frac{1-t}3\right) = \frac 23a +b -ab$.
  Write $g_n(u) = -(1-u)\log(1-u) - u \log \frac un \ge 0$.
  Then
  \begin{align}
    &~ G_3(1-a,1-b) \\
    \nonumber =&~
    \frac{\log 6}{\log 3} \cdot \left(\log 3 - g_2(c) \right) - (\log 2 - g_1(a) + t ( \log 3 - g_2(b))) \\
    \nonumber \le&~ \frac{\log 6}{\log 3} \cdot \left(\log 3 - g_2(c) \right) - (\log 2 - g_1(a) - a \log 3 + \log 3 - g_2(b)) \\
    \nonumber =&~ g_3(a) + g_2(b) - \frac{\log 6}{\log 3} \cdot g_2(c).
  \end{align}
  For $0\le w\le 0.001$, we have
  \begin{align}
    0.999 w \le -(1-w)\log(1-w) \le w.
  \end{align}
  So
  \begin{align}
    g_3(a) \le a \log \frac{3e} a, \qquad
    g_2(b) \le b \log \frac{2e} b, \qquad
    g_2(c) \ge c \left(0.999 + \log \frac 2c\right) =: h(c).
  \end{align}
  Because $h(c)$ is concave and increasing for $0\le c\le 0.002$, we have
  \begin{align}
    h(c) \ge \frac 12 h\left(\frac 43 a\right) + \frac 12 h(2b(1-a)) \ge \frac 12 h\left(\frac 43 a\right) + \frac 12 h(1.998 b).
  \end{align}
  For $0<a\le 0.001$ and $0\le b\le 0.001$, we have
  \begin{align}
    a \log \frac{3e} a < \frac 12 h\left(\frac 43 a\right), \qquad
    b \log \frac{2e} b \le \frac 12 h(1.998 b).
  \end{align}
  So $G_3(t,x) < 0$ for $0.999 \le t < 1$, $0.999\le x\le 1$.
  This finishes the proof for Case 1.

  \textbf{Case 2: $t\le 0.999$ or $x\le 0.999$.}
  Let $A = \left(\left[t^*,1\right]\times \left[\frac 13, 1\right]\right) \backslash \left([0.999,1] \times [0.999, 1]\right)$.
  Our goal is to prove that $G_3(t,x)<0$ for all $(t,x)\in A$.
  The proof strategy is as follows.
  We choose $\epsilon,\delta>0$ and a finite set $A^* \subseteq A$ such that the following are true.
  \begin{enumerate}[label=(\alph*)]
    \item \label{item:complete-bipartite:case-2:i} $G_3(t,x) < -\epsilon$ for all $(t,x)\in A^*$;
    \item \label{item:complete-bipartite:case-2:ii} For any $(t,x)\in A$, there exists $(t^*,x^*)\in A^*$ such that $\max\{|t-t^*|,|x-x^*|\}\le \delta$;
    \item \label{item:complete-bipartite:case-2:iii} For any $(t,x),(t',x')\in A$, if $\max\{|t-t'|,|x-x'|\}\le \delta$, then $|G_3(t,x)-G_3(t',x')|\le \epsilon$.
  \end{enumerate}
  If all three items hold, then they imply our goal.

  We take $\epsilon=0.00078$, $\delta=10^{-5}$, $A^* = A \cap \left(10^{-5} \bZ \times 10^{-5} \bZ\right)$.
  \cref{item:complete-bipartite:case-2:i} is verified using a computer program by iterating over all points in $A^*$.
  \cref{item:complete-bipartite:case-2:ii} is immediate by our choice of $A^*$.
  It remains to prove \cref{item:complete-bipartite:case-2:iii}.
  Note that on $A$, $f_3\left(t x+\frac{1-t}{3}\right)$ and $f_2(t) + t f_3(x)$ are non-decreasing in both $t$ and $x$.
  By convexity and monotonicity,
  \begin{align}
    \sup_{\substack{\frac 13 \le u,u'\le 1 \\ |u-u'|\le \delta}} |f_3(u)-f_3(u')| &= |f_3(1)-f_3(1-\delta)| \le 0.00014,\\
    \sup_{\substack{\frac 12\le u,u'\le 1 \\ |u-u'|\le \delta}} |f_2(u)-f_2(u')| &= |f_2(1)-f_2(1-\delta)| \le 0.00013.
  \end{align}
  So for $(t,x),(t',x')\in A$ with $\max\{|t-t^*|,|x-x^*|\}\le \delta$, we have
  \begin{align}
    |G_3(t,x)-G_3(t,x')| &\le
    \max\left\{ \frac{\log 6}{\log 3} \cdot \left| f_3\left(t x+\frac{1-t}{3}\right) -f_3\left(t x'+\frac{1-t}{3}\right) \right|,\right. \\
    \nonumber &\qquad \left.\left|(f_2(t) + t f_3(x))-(f_2(t) + t f_3(x'))\right|\right\} \\
    \nonumber &\le \max\left\{ \frac{\log 6}{\log 3} \cdot 0.00014, 0.00014\right\} \le 0.00023,\\
    |G_3(t,x')-G_3(t',x')| &\le
    \max\left\{ \frac{\log 6}{\log 3} \cdot \left| f_3\left(t x'+\frac{1-t}{3}\right) -f_3\left(t' x'+\frac{1-t'}{3}\right) \right|,\right. \\
    \nonumber &\qquad \left.\left|(f_2(t) + t f_3(x'))-(f_2(t') + t f_3(x'))\right|\right\} \\
    \nonumber &\le \max\left\{ \frac{\log 6}{\log 3} \cdot 0.00014, 0.00013 + \delta \log 3 \right\} \le 0.00023.
  \end{align}
  Therefore
  \begin{align}
    |G_3(t,x)-G_3(t',x')| &\le 0.00023 + 0.00023 = 0.00046 < \epsilon.
  \end{align}
  This proves \cref{item:complete-bipartite:case-2:iii}, thus finishing the proof for Case 2.

  \cref{eqn:complete-bipartite-two-var} follows by combining the three cases. This finishes the proof that $\eta_{\KL}(\pi,P)=\frac{\log 3}{2\log 6}$ and equality is achieved at and only at point distributions.
\end{proof}
Using the same proof strategy one could in principle prove the statement of \cref{prop:complete-bipartite} for any given $n\ge 3$ (assuming it is true). However it is unclear to us how to prove uniformly for all $n\ge 3$.


Finally, we provide sufficient conditions for the extremal distribution for $\eta_{\KL}$ to have full support. This result can be contrasted with \cref{ex:coloring,ex:complete-bipartite}, and \cite{bristiel2024entropy} where it is shown that the half-step entropy contraction for many natural chains (e.g., the random transposition model) has point measures as their only extremizers.
\begin{lemma} \label{lem:delta-extremal-full-supp}
  Let $(\pi, P)$ be a reversible pair where $P$ is irreducible.
  Suppose that either
  \begin{enumerate}[label=(\roman*)]
    \item \label{item:lem-delta-extremal-full-supp:i} $P(x,x)\ge \frac 12$ for all $x\in \cX$, and $\eta_{\KL}(\pi, P) > \frac 12$, or
    \item \label{item:lem-delta-extremal-full-supp:ii} $P(x,y) > 0$ for all $x,y\in \cX$.
  \end{enumerate}
  Let $\nu$ be a distribution on $\cX$ satisfying $\eta_{\KL}(\pi, P) = \frac{D(\nu P \| \pi P)}{D(\nu \| \pi)}$.
  Then $\nu$ has full support.
\end{lemma}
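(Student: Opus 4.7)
My plan is to argue by contradiction via a one-sided perturbation at a missing atom. Suppose $\nu$ has support $S \subsetneq \cX$, fix some $x_0 \in \cX \setminus S$, and consider $\nu_t := (1-t)\nu + t\, \mathbbm{1}_{x_0}$ for small $t > 0$. Since $\pi$ has full support (by irreducibility of $P$) and $\nu \neq \pi$, we have $0 < D(\nu_t \| \pi) < \infty$ for small $t$, so by the maximality of $\nu$ and using $\pi P = \pi$,
\[
r(\nu_t) \;:=\; D(\nu_t P \,\|\, \pi) \;-\; \eta_{\KL}(\pi, P)\, D(\nu_t \,\|\, \pi) \;\le\; 0.
\]

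The next step is a Taylor-style expansion as $t \to 0^+$ in which a \emph{singular} $t\log t$ term appears. A direct computation gives
\[
D(\nu_t \,\|\, \pi) = D(\nu \,\|\, \pi) \;+\; t\log t \;+\; O(t).
\]
For $D(\nu_t P \,\|\, \pi)$, writing $S' := \supp(\nu P)$, the smooth contributions come from $y \in S'$, while each $y \notin S'$ with $P(x_0, y) > 0$ contributes $t P(x_0,y)\log(t P(x_0,y)/\pi(y))$. Setting $p := P(x_0, \cX \setminus S')$ and $\eta := \eta_{\KL}(\pi, P)$, one obtains
\[
D(\nu_t P \,\|\, \pi) = D(\nu P \,\|\, \pi) \;+\; p\, t\log t \;+\; O(t),
\]
hence $r(\nu_t) = (p - \eta)\, t\log t + O(t)$. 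Because $t\log t < 0$ and $|t\log t|/t \to \infty$ as $t \to 0^+$, the inequality $r(\nu_t) \le 0$ forces $p \ge \eta$ for every choice of $x_0 \in \cX \setminus S$.

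I then conclude via the two hypotheses. In case \ref{item:lem-delta-extremal-full-supp:ii}, $P$ is strictly positive, so $\nu P$ has full support, $S' = \cX$, and $p = 0 < \eta$, a contradiction (that $\eta > 0$ is a mild non-degeneracy that I take for granted). In case \ref{item:lem-delta-extremal-full-supp:i}, laziness gives $P(x_0, x_0) \ge 1/2$. If $x_0 \in S'$, then $x_0 \notin \cX \setminus S'$, hence $p \le 1 - P(x_0, x_0) \le 1/2 < \eta$, contradiction. So every $x_0 \in \cX \setminus S$ must lie outside $S'$, i.e., $S' \subseteq S$. Laziness also ensures $(\nu P)(x) \ge \tfrac12 \nu(x) > 0$ for $x \in S$, so $S \subseteq S'$. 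Thus $S = S'$, meaning $P(x, y) = 0$ for all $x \in S$ and $y \notin S$; so $S$ is $P$-absorbing with $\emptyset \neq S \subsetneq \cX$, contradicting the irreducibility of $P$.

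The main obstacle will be the two-step structure under hypothesis \ref{item:lem-delta-extremal-full-supp:i}: the perturbation by itself only rules out missing atoms $x_0$ that already lie in $\supp(\nu P)$, and the remaining possibility — that $\supp(\nu P)$ coincides with $\supp(\nu)$ — must be excluded by combining the support deficit with laziness (to pin down $S = \supp(\nu P)$) and then invoking irreducibility. Verifying the singular expansion — that the $t\log t$ coefficient is exactly $p$ and that all remaining contributions are genuinely $O(t)$ — is the other delicate piece.
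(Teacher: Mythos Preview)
Your proof is correct and follows essentially the same perturbation idea as the paper: move mass to a point outside $\supp\nu$, extract the singular $t\log t$ (resp.\ $\log\epsilon$) term, and compare its coefficient $p = P(x_0,\cX\setminus\supp(\nu P))$ to $\eta$. The only noteworthy difference is in case~\ref{item:lem-delta-extremal-full-supp:i}: the paper uses irreducibility \emph{upfront} to pick $a\notin S$ and $b\in S$ with $P(a,b)>0$, which forces $a\in\supp(\nu P)$ directly (hence $p\le 1-P(a,a)\le\tfrac12$), whereas you run the perturbation for \emph{every} $x_0\notin S$, deduce $S=\supp(\nu P)$, and invoke irreducibility at the end via an absorbing-set contradiction --- a slightly more roundabout but equally valid route.
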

\begin{proof}

  Let $f = \frac{d\nu}{d\pi}$ be the Radon-Nikodym derivative.
  For the sake of contradiction assume that $\supp f \ne \cX$.
  Choose $a\in \cX - \supp f$ and $b\in \supp f$ such that $P(a,b) > 0$.
  Because $P$ is irreducible, such $(a,b)$ always exists.

  Let $h = \mathbbm{1}_a - \frac{\pi(a)}{\pi(b)}\mathbbm{1}_b$ and $f_\epsilon = f + \epsilon h$.
  For small enough $\epsilon>0$, we have $f_\epsilon\ge 0$ and $\pi[f_\epsilon]=1$.
  We prove that for $\epsilon > 0$ small enough, we have
  \begin{align} \label{eqn:lem-delta-extremal-full-supp:i}
  \frac{d}{d \epsilon} \frac{\Ent_\pi(P f_\epsilon)}{\Ent_\pi(f_\epsilon)} > 0.
  \end{align}

  Computation shows that
  \begin{align}
  \frac{d}{d \epsilon} \frac{\Ent_\pi(P f_\epsilon)}{\Ent_\pi(f_\epsilon)}
  = \frac 1{\Ent_\pi(f_\epsilon)^2} \left(\pi[Ph\log (P f_\epsilon)] \pi[f_\epsilon \log f_\epsilon] - \pi[h \log f_\epsilon] \pi[Pf_\epsilon \log (P f_\epsilon)]\right).
  \end{align}

  Expanding near $\epsilon=0$ gives
  \begin{align}
  \pi[h \log f_\epsilon] &= \pi(a) \log \epsilon + O(1),\\
  \pi[P h \log (P f_\epsilon)] &= \sum_{j\in \cX} \pi(j) \left(P(j,a)-P(j,b) \frac{\pi(a)}{\pi(b)}\right) \log (P f_\epsilon(j)) \\
  \nonumber &=\sum_{j\in \cX - \supp (P f)} \pi(j) P(j,a) \log (P(j,a) \epsilon) + O(1) \\
  \nonumber &=\pi(a) P(a, \cX - \supp(P f)) \log \epsilon + O(1),\\
  \pi[f_\epsilon \log f_\epsilon] &= \Ent_\pi(f) + o(1),\\
  \pi[P f_\epsilon \log (P f_\epsilon)] &= \Ent_\pi(P f) + o(1).
  \end{align}

  \textbf{Case \cref{item:lem-delta-extremal-full-supp:i}.}
  Because $P(a, b) > 0$, we have $a\in \supp(P f)$.
  Because $P$ is lazy, we have
  \begin{align}
    P(a, \cX - \supp(P f)) \le 1-P(a, a) \le \frac 12.
  \end{align}
  By assumption, $\frac{\Ent_\pi(P f)}{\Ent_\pi(f)} = \eta_{\KL}(\pi, P) > \frac12$.
  Therefore \cref{eqn:lem-delta-extremal-full-supp:i} holds for $\epsilon>0$ small enough.

  \textbf{Case \cref{item:lem-delta-extremal-full-supp:ii}.}
  In this case, $P(a, \cX - \supp(P f)) = 0$. So \cref{eqn:lem-delta-extremal-full-supp:i} holds for $\epsilon>0$ small enough.
\end{proof}

{
\section*{Acknowledgments}
The work of Y.G.~was supported in part by the National Science Foundation under Grant No.~DMS-1926686.
The work of Y.P.~was supported in part by the MIT-IBM Watson AI Lab and by the National Science Foundation under Grant No CCF-2131115.
}

\ifdefined\openprob
\section{Open problems}
\begin{enumerate}[label=(\arabic*)]
  \item characterize MLSI/SDPI for birth-death chains. We have LSI within constant factor (from
  Chen-SaloffCoste), but for MLSI only some lower bound under special constraints (Caputo etal).
  \item determine $\delta$ for random transposition model
  \item characterize $\cF_\pi$, the set of factorizable kernels wrt $\pi$, even for uniform $\pi$
\end{enumerate}
\fi

\bibliographystyle{alpha}
\bibliography{ref}
\end{document}